\newcommand{\naturals}{\mathbb{N}}
\newcommand{\integers}{\mathbb{Z}}
\newcommand{\reals}{\mathbb{R}}
\newcommand{\torus}{\mathbb{T}}
\newcommand{\eps}{\varepsilon}
\newcommand{\deh}{\textup{d}}
\newcommand{\st}{\ \text{s.t.}\ }
\newcommand{\const}{C_\#}
\newcommand{\intr}[1]{\textup{int}\,#1}
\newcommand{\cl}[1]{\textup{cl}\,#1}
\newtheorem{thm}{Theorem}[section]
\newtheorem{lem}[thm]{Lemma}
\newtheorem{cor}[thm]{Corollary}
\theoremstyle{remark}
\newtheorem{rmk}[thm]{Remark}
\theoremstyle{definition}
\newtheorem{mydef}[thm]{Definition}
\newtheorem*{mthm}{Main Theorem}
\numberwithin{equation}{section}
\title[On dispersing billiards with corner points]{An expansion estimate for dispersing planar billiards with corner points}
\author{Jacopo De Simoi}
\address{Jacopo De Simoi\\
  Dipartimento di Matematica\\
  II Universit\`{a} di Roma (Tor Vergata)\\
  Via della Ricerca Scientifica, 00133 Roma, Italy.}
\email{{\tt desimoi@mat.uniroma2.it}}
\author{Imre P\'eter T\'oth}
\address{Imre P\'eter T\'oth\\
  MTA-BME Stochastics Research group
  Egry J\'ozsef u. 1, H-1111 Budapest,  Hungary
}
\email{{\tt mogy@math.bme.hu}}
\begin{document}
\begin{abstract}
  It is known that the dynamics of planar billiards satisfies strong mixing properties
  (e.g. exponential decay of correlations) provided that some expansion condition on
  unstable curves is satisfied.  This condition has been shown to always hold for smooth
  dispersing planar billiards, but it needed to be assumed separately in the case of
  dispersing planar billiards with corner points.

  We prove that this expansion condition holds for any dispersing planar billiard with
  corner points, no cusps and bounded horizon.
\end{abstract}

\maketitle
\section{Introduction}\label{s_introduction}
In the study of ergodic and statistical properties of hyperbolic dynamical systems with
singularities, it is essential to ensure a growth condition for manifolds tangent to the
unstable cone field (or \emph{u-manifolds}); in fact, even if hyperbolicity guarantees
their expansion on a local scale, singularities may cut u-manifolds in arbitrarily small
pieces, and this fact could effectively prevent us from obtaining any global result on the
system (see e.g. \cite{tsujii} for a concrete realization of this scenario).  Growth
conditions of this kind are usually stated in the form of a ``Growth Lemma'', which
ensures in precise terms that any sufficiently small u-manifold will be cut by
singularities in pieces that are, typically, large enough.

To fix ideas, consider a piecewise smooth map $\cm:\cs\to\cs$; we assume that $\cs$ is a
two-dimensional manifold and that $\cm$ has one-dimensional stable and unstable subspaces;
in systems under consideration, the singularity set is given by a union of smooth curves
of $\cs$; assume that we can find a uniform upper bound --as a function of $n$-- on the
number of smooth components of the singularity set of $\cm^n$ which join at any given
point $z\in\cs$: this is called a \emph{complexity bound} for the map.  Assuming some
uniform transversality condition between u-manifolds and singularity manifolds, a
complexity bound immediately implies a bound on the number of connected components of the
$n$-th image of a sufficiently small u-manifold.  Then it is possible to argue, by general
arguments, that a subexponential complexity bound implies the Growth Lemma (see
e.g. \cite{DimaLectures}).

When it is not possible, or not feasible, to obtain a complexity bound, one can look for a
more sophisticated condition, which contains more dynamical information: this strategy
relies on obtaining a so-called \emph{expansion estimate}: let $W\subset\cs$ be a small
u-manifold and let us denote by $W_i$ the connected components of its image $\cm W$, as
they are cut by singularities; let $\Lambda_i$ denote the minimum expansion rate of $\cm$
along $W$ in the corresponding preimage $\cm\inv W_i$; then we say $\cm$ satisfies a
one-step expansion estimate if
\begin{equation}
  \label{e_oneStepEstimate}
  \liminf_{\delta\to 0}\sup_{W:|W|<\delta} \sum_i\frac{1}{\Lambda_i}<1,
\end{equation}
where $|W|$ is the length of $W$ in a convenient norm.
Notice that a subexponential complexity bound for the map $\cm$ immediately implies an
expansion estimate for some iterate $\cm^n$ (or a $n$-step expansion estimate); therefore,
condition~\eqref{e_oneStepEstimate} is indeed weaker than a complexity bound.
Nevertheless, this condition is sufficient to prove the Growth Lemma by general arguments,
provided that we have (mild) control on the distortion of $\cm$ (see e.g. \cite[Theorem
5.52]{ChM}).

In this paper we obtain an expansion condition for planar billiards with corner points, no
cusps and bounded horizon.  Such condition is well known to hold (morally since
\cite{bsch1,bsch2}) for planar billiards with no corner points, but it needed to be
assumed separately --in a somewhat artificial fashion-- for billiard with corner points.
Our result implies that this additional assumption is unnecessary, and thus allows to
conclude that, for instance, the dynamics of planar billiards with corner points enjoys
exponential decay of correlation.

Let us remark that a subexponential complexity bound for finite horizon planar billiards
with corner points had been announced in \cite{Bun}, along with an outline of the proof.
In the present work, on the other hand, we directly obtain an expansion estimate, without
proving a complexity bound.  This approach has, in our opinion, two main advantages.
First, it does not seem to be possible to obtain a complexity bound sharper than
$\sim\exp(n-\log n)$, which is likely to be largely sub-optimal.  In our work we obtain an
expansion estimate which is more efficient than the one which would be obtained using such a
complexity bound.  Moreover --and in our opinion more importantly-- we believe that our
result can in principle be generalized to the case of unbounded horizon billiards.
\bigskip
\paragraph{\sc Acknowledgements} We would like to thank the ICTP - Trieste, where a large
portion of this work was done (during the 2012 ICTP-ESF workshop in Dynamical Systems).
This work was partially supported by the European Advanced Grant Macroscopic Laws and
Dynamical Systems (MALADY) (ERC AdG 246953) and by OTKA grant 71693.

Finally, we wish to thank D. Dolgopyat, for many valuable discussions and suggestions.
\section{Definitions}\label{s_definition}
In this section we provide all definitions and facts which are necessary for our
exposition; the reader might refer to \cite{bsch1,bsch2,ch,ChM} for (a wealth of)
additional details; our notations mostly follow, whenever possible, the ones used in the
given references.  We will state a number of lemmata about billiard dynamics, whose
proofs, unless better specified, can be found in the above references.

A \emph{billiard table} $\btable$ is the closure of a connected domain of $\reals^2$ (or
$\torus^2$) so that $\partial\btable$ is a \emph{finite} union of smooth curves
$\Gamma_i$, with pairwise disjoint interiors, that we call \emph{boundary curves} or
\emph{walls}.  Fix the standard orientation on $\partial\btable$ so that, walking along
the boundary in the positive direction, the interior of $\btable$ lies on the left hand
side.  If a point $x\in\partial\btable$ belongs to the interior of some $\Gamma_i$ we say
that $x$ is a \emph{regular point}, otherwise we call it a \emph{corner point}.  We assume
that every corner point $x$ is \emph{simple}, that is, any sufficiently small neighborhood
of $x$ intersects at most $2$ boundary curves\footnote{ This assumption is convenient,
  although inessential: our argument can be adapted to the non-simple case.}.  We assume
the billiard table to be \emph{dispersing}, that is, all $\Gamma_i$ to be outward convex
(i.e., given any two points $x, y$ on $\Gamma_i$, the interior of the segment joining
$x$ with $y$ does not intersect $\btable$): moreover, at any regular point
$x\in\partial\btable$ the curvature $\curv(x)$ is uniformly bounded away from zero.

We consider the dynamics of a point particle which moves with unit speed in the interior
of $\btable$ with elastic reflections on $\partial\btable$; we refer to this system as the
\emph{billiard flow}.  Notice that the flow is not well defined after a collision with a
corner point: we will resolve this issue later in this section.  Let
$\cs=\partial\btable\times[-\pi/2,\pi/2]$ denote the usual cross-section of the phase
space of the billiard flow; we employ standard coordinates $(r,\varphi)$ on $\cs$, where
$r$ is the arc-length parameterization of $\partial\btable$ and $\varphi\in[-\pi/2,\pi/2]$
is the angle between the inward normal to the boundary and the outgoing billiard
trajectory.  We denote by $|\cdot|$ the Euclidean metric in $\cs$.  In phase portraits, we
follow the convention of considering the $r$ coordinate as \emph{horizontal} and the
$\varphi$ coordinate as \emph{vertical} (see Figure~\ref{f_coordinates}).
\begin{figure}[!h]
  \centering
  \includegraphics{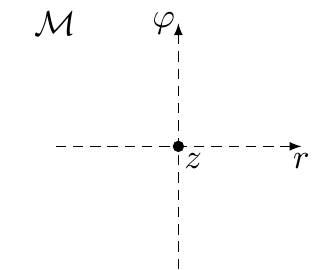}\hspace{2cm}
  \includegraphics{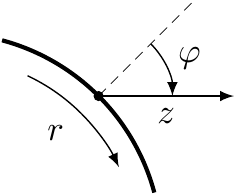}
  \caption{Sketch of phase space coordinates on the billiard table: the phase point $z$
    corresponds to the trajectory leaving $\partial\btable$ at the point identified by the
  boundary coordinate $r$ with angle $\varphi$ from the normal pointing inside the
  billiard table.}
  \label{f_coordinates}
\end{figure}

Let $\cm:\cs\to\cs$ be the Poincar\'e map of the
billiard flow, which is commonly called \emph{billiard map}.  For $z\in\cs$, denote by
$\tau(z)$ the return time of the flow to $\cs$.  The map $\cm$ preserves the smooth
probability measure $\deh\nu = C_\nu \cos\varphi\deh r\deh\varphi$, where $C_\nu$ is a
normalization constant.

We can write $\cs=\bigcup_i\cs_i=\bigcup_i\Gamma_i\times[-\pi/2,\pi/2]$; each $\cs_i$ is
thus diffeomorphic to either a cylinder (if $\Gamma_i$ is a closed curve surrounding a
scatterer with no corner points, i.e. $\partial\Gamma_i=\emptyset$) or a square
(otherwise).  Let us define
\begin{align*}
  \grazing_0&=\bigcup_i\grazing_{0,i}=\bigcup_i\Gamma_i\times\{\pm \pi/2\}&
  \corner_0&=\bigcup_i\corner_{0,i}=\bigcup_i\partial\Gamma_i\times[-\pi/2,\pi/2].
\end{align*}
The set $\grazing_0$ contains all grazing collisions, whereas $\corner_0$ contains all
collisions with a corner point; let $\sing_0=\grazing_0\cup\corner_0$.  The maps $\cm$ and
$\cm\inv$ are piecewise smooth: their singularities lie on the set $\cm\inv\sing_0$ and
$\cm\sing_0$, respectively. We denote by $\sing_l=\cm^l\sing_0$ and
$\sing_{m,n}=\bigcup_{l=m}^{n}\sing_l$ so that the singularities of $\cm^n$ and $\cm^{-n}$
lie, respectively, on the sets $\sing_{-n,0}$ and $\sing_{0,n}$.  Similarly, we define
$\grazing_l$, $\grazing_{m,n}$ and $\corner_l$, $\corner_{m,n}$.
\begin{rmk}\label{r_multi_step_sing_defined}
  Note that our definition of the sets $\sing_l$ is not really precise at the moment.
  Indeed, we have not yet fixed the definition of $\cm$ at corner points, and we definitely
  need to do that for $\sing_l$ to make sense.  We will fill this gap later, making sure that
  $\sing_{-n,0}$ indeed becomes the singularity set for $\cm^n$.
\end{rmk}
If $\partial\btable$ has no corner points\footnote{ Note that dispersing billiards without
  corner points are only realizable as subsets of $\torus^2$}, then $\cs$ is given by the
disjoint union of smooth simple closed curves, and $\corner_0=\emptyset$.  On the other
hand, if $\partial\btable$ has corner points, then some of the $\cs_i$ will be rectangles
which share a portion of both vertical edges with some other rectangles in $\{\cs_j\}$.
It is instead convenient to cut the phase space along corner points and redefine\footnote{
  The reader will excuse our abuse of notation} $\cs$ as the \emph{disjoint} union
$\cs=\bigsqcup_i\cs_i$, in such a way that each $\cs_i$ is a connected component of $\cs$:
this allows us to write $\partial\cs_i=\grazing_{0,i}\cup\corner_{0,i}$ and
$\partial\cs=\grazing_0\cup\corner_0$.  More importantly, for each $i$,
$\cm|_{\cm\inv\cs_i}$ uniquely extends to the closure $\overline{\cm\inv\cs_i}$ by
continuation:  the map $\cm$ can thus be multi-valued at points belonging to the closure
of several $\cm\inv\cs_i$'s.

We now introduce the notion of \emph{proper} and \emph{improper} collisions of a billiard
trajectory with a boundary point $x\in\partial\btable$: intuitively, improper collisions
are such that they can be avoided by perturbing the trajectory; for smooth billiard
tables, only tangential collisions can be improper, and so there is no need to introduce a
separate notion.  In our case, we first need to introduce a few auxiliary definitions.
Let $x\in\partial\Omega$; denote with $w_-$ (resp. $w_+$) the limit of tangent vectors to
the boundary from the left (resp. right) according to the orientation chosen in at the
beginning of this section (they can coincide
if, for instance, $x$ is regular).  Then $w_+$ and $-w_-$ cut the tangent space (that is
$\reals^2$) in two open sectors: we call \emph{internal} the sector bounded (going
clockwise) by $-w_-$ and $w_+$ and \emph{external} the opposite sector.  We denote with
$\gamma$ the angle of the internal sector; if $x$ is regular, then necessarily
$\gamma=\pi$; otherwise a corner point is said to be \emph{acute} if $0<\gamma<\pi$,
\emph{flat} if $\gamma=\pi$ and \emph{obtuse} if $\gamma>\pi$.  If $\gamma=0$ we have a
\emph{cusp}; we assume our billiard table to have no cusps.
\begin{mydef}
  We say that a collision is \emph{proper} if the incoming velocity vector lies in the
  external sector and \emph{improper} otherwise.
\end{mydef}
Notice that if a corner point $x$ is acute, then all collisions hitting $x$ are proper.
We assume the \emph{bounded horizon} condition, that is, that the maximal length of a
straight billiard trajectory (i.e. between two proper collisions) is uniformly bounded
above: $\tau(z)\leq\tmax<\infty$.  Let us summarize our
\begin{assumptions}
  We assume that our billiard table $\btable$ is such that:
  \begin{itemize}
  \item[(A0)] all corner points are simple;
  \item[(A1)] all corner points have strictly positive internal angle, i.e. there are no cusps;
  \item[(A2)] the bounded horizon condition is satisfied.
  \end{itemize}
\end{assumptions}

It is well known that the billiard flow admits a unique (non smooth) continuation after a
grazing collision; on the other hand, as we mentioned before, the trajectory is not --in
general-- well defined after a collision with a corner point.  Since we are interested in
statistical properties of the smooth invariant measure $\nu$ --or, more generally, in
describing orbits of Lebesgue-typical phase points-- we could in principle define the
dynamics arbitrarily for the zero measure set of phase points whose trajectories hit a
corner point, or even leave it undefined: the choice of definition does not influence the
statistical properties of the system.  However, understanding the possible trajectories
occurring \emph{near} these singular points is of utmost importance in our study, so it is
convenient to define the dynamics at (corner) singularities so that it reflects --at least
to some extent-- these possibilities.  In particular, it is convenient to make sure that
singularities of higher iterates of $F$ (which can naturally be defined as the boundaries
of the domains of smoothness consisting of regularly colliding phase points) can be
obtained as (inverse) images by $F$ of the singularities (recall
Remark~\ref{r_multi_step_sing_defined}).  This is why we define the dynamics $F$ at corner
points as a \emph{possibly multi-valued} function, having \emph{branches} corresponding to
all possible limits of nearby trajectories.

Let us consider a trajectory having a collision with a corner point $x\in\partial\btable$;
we call this the \emph{reference trajectory}.  If the collision is improper, we have three
possibilities: trajectories close to the reference one may hit either one of the two walls
which join at $x$, or miss both of them.  However, it is easy to see (see
e.g. Figure~\ref{f_properCorner}) that if there exist nearby trajectories colliding with
the back wall, then the latter is necessarily tangent to the reference trajectory, and
thus the corresponding continuation coincides with the one relative to trajectories
missing the walls.

In case of a proper collision, there is no possibility of a nearby trajectory missing both
walls; by arguments similar to the ones above we can prove the following
\begin{lem}[{see \cite[Section 2.8]{ChM}}]
  Let $x\in\partial\btable$ be a corner point of interior angle $\gamma$; a trajectory of
  the billiard flow hitting $x$ has at most $2$ possible continuations.  Every trajectory
  colliding with $x$ admits a unique continuation if and only if $\gamma=\pi/n$, for some
  natural number $n$.
\end{lem}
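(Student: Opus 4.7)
The plan is to use the classical \emph{unfolding} (or developing) construction for wedge billiards.  Near the corner point $x$ the billiard table locally coincides with a wedge of interior angle $\gamma$; since $\gamma>0$ by the no-cusps assumption (A1), any trajectory entering a sufficiently small neighborhood $U$ of $x$ undergoes only finitely many reflections within $U$ before escaping.

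First, I would classify the perturbed trajectories converging to the reference trajectory according to which of the two walls $w_-$, $w_+$ meeting at $x$ they strike first.  Every perturbation lying in $U$ falls into one of two one-parameter families (in the improper case, the paragraph preceding the lemma shows that the ``missing both walls'' trajectories coincide, in the limit, with one of the reflecting families, so only two families remain).

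Second, I would apply the unfolding: rather than reflecting the trajectory at each wall, reflect the wedge itself to produce a fan of wedge-copies sharing the apex $x$.  In this picture the bouncing trajectory becomes a single straight line; the number of wall-rays crossed equals the number of reflections in the original billiard, and the trajectory exits once its signed radial component turns from negative to positive.  For each perturbed trajectory in the $w_+$-family sufficiently close to the reference one, the number of reflections is a fixed integer $n_+$ and the exit direction depends continuously on the perturbation parameter, so letting the perturbation vanish yields a unique one-sided limiting continuation.  The analogous argument applied to the $w_-$-family produces a second limit, corresponding to some integer $n_-$.  This already shows that there are at most two possible continuations, proving the first claim.

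Third, I would characterize when the two limits coincide.  In the unfolded picture, the reference trajectory corresponds to a straight line through $x$, and perturbations on either side traverse a number of wedge-copies belonging to $\{\lfloor \pi/\gamma\rfloor,\lceil \pi/\gamma\rceil\}$ before their radial component flips sign.  If $\gamma=\pi/n$ with $n\in\naturals$, these two integers coincide, the fan of $n$ wedge-copies exactly tiles a half-plane, and the wedge billiard becomes conjugate to free motion in a half-plane modulo the reflection group; consequently the continuation through $x$ is unique for every reference trajectory.  Conversely, if $\pi/\gamma\notin\naturals$, then for an open set of incoming directions the two integers differ by one, producing two distinct limiting outgoing directions.

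The most delicate point is verifying continuity of the exit direction within each family and the existence of the one-sided limits; this is a routine but careful geometric bookkeeping in the unfolded picture.  The assumption $\gamma>0$ ensures that the fan is finite and all the quantities involved are well defined, so no genuine obstacle arises.
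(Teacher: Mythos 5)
Your proposal is correct and matches what the paper intends: the paper does not prove this lemma itself, but cites Chernov--Markarian (Section 2.8) and notes it follows ``by arguments similar to the ones above,'' i.e.\ by classifying nearby trajectories according to the first wall they hit (with the ``miss both walls'' family merging with a reflecting family in the improper case) --- exactly your first step. The unfolding computation you then carry out, including the dihedral-group/half-plane tiling characterization of $\gamma=\pi/n$, is the standard argument from that reference and is sound.
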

We emphasize that the above lemma refers to possible continuations of the \emph{flow};
indeed, the continuation of the map is also two-valued, but the ``unique if
$\gamma=\pi/n$'' part of the statement is not true for the map.

After a single collision with a wall occurring at a corner point, the trajectory might
immediately collide with the adjacent wall; in this case we say that we have an
\emph{immediate collision}; if, in addition, the collision is grazing, we say we have an
\emph{immediate grazing collision}.  In principle, a trajectory may undergo several
subsequent immediate collisions before leaving the corner; this is usually referred to as
a \emph{corner sequence}\footnote{ Indeed, in the literature, one generally refers to a
  \emph{corner sequence} as a set of consecutive collisions occurring in a neighborhood of
  a corner point rather than at the corner point.}.  Assumption (A1) implies that the
number of immediate collisions in a corner sequence is uniformly bounded above for any
given billiard table (see e.g. \cite{ChM}).  Notice that an immediate grazing collision is
necessarily the last one in a collision sequence (see also \cite[Section 9]{ch}).
\begin{figure}[!ht]
  \centering
  \includegraphics{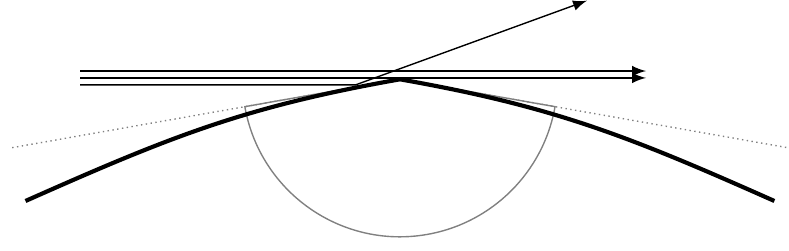}
  \caption{Trajectories in a neighborhood of a reference trajectory undergoing improper
    collision: some trajectories miss the scatterer and fly by, others will hit the
    scatterer frontally with some angle.  No collision with the back of the scatterer is
    possible unless the back wall is tangent to the reference trajectory.}
  \label{f_properCorner}
\end{figure}

One of the peculiar technical difficulties in the theory of dispersing billiards is the
fact that derivatives blow up at grazing singularities: this fact prevents simple
distortion control, which is crucial to obtain ergodic and statistical results.  In order
to provide an elegant solution to this problem, Sinai (see \cite{bsch2}) introduced
so-called \emph{homogeneity strips} $\sst_{\pm k}\subset\cs$ defined as follows:
\begin{align*}
  \sst_k&=\{(r,\varphi)\in\cs\st\varphi\in(\pi/2-k^{-2},\pi/2-(k+1)^{-2}]\}\\
  \sst_{-k}&=\{(r,\varphi)\in\cs\st\varphi\in[-\pi/2+(k+1)^{-2},-\pi/2+k^{-2})\},
\end{align*}
for $k\geq k_0$ where $k_0>0$ is fixed large enough (it will be specified below, after the
statement of Lemma~\ref{l_grazing}); for notational convenience, let us also define
\[
\sst_0=\{(r,\varphi)\in\cs\st\varphi\in[-\pi/2+k_0^{-2},\pi/2-k_0^{-2}]\}.
\]
Then on each $\sst_k$, the derivatives of $\cm$ will be roughly comparable and we will be
able to control its distortion.
We let $\exgrazing_0=\grazing_0\cup\bigcup_k\partial\sst_k$ and $\exsing=\exgrazing_0\cup
V_0$.  The boundaries $\partial\sst_k$ of homogeneity strips are called \emph{secondary
  singularities}. Our goal is now to prove the one-step expansion
estimate~\eqref{e_oneStepEstimate} for the billiard system in which these (infinitely
many) secondary singularities are also taken into account.  As usual in the case of planar
billiards, this requires some extra technical work, but means no real extra difficulty,
because expansion in the homogeneity strips with $k\ge k_0$ is huge and their total
contribution to the sum in \eqref{e_oneStepEstimate} can be made arbitrarily small by
choosing $k_0$ large enough (this will in fact be the content of Lemma~\ref{l_grazing}).
\begin{lem}[Invariant cones]
  At each point $(r,\varphi)$ let us consider nonzero vectors $(\delta r,\delta\varphi)$ belonging
  to the tangent space $T\cs=\reals^2$.  Let us call \emph{increasing cone} the cone given by
  $\{\delta r\delta\varphi\geq 0\}$ and \emph{decreasing cone} the cone $\{\delta
  r\delta\varphi\leq 0\}$. Then the differential $\deh\cm$ maps the increasing cone
  strictly into itself and likewise $\deh\cm\inv$ maps the decreasing cone strictly into
  itself.
\end{lem}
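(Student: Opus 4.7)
The plan is to use the explicit formula for the differential of the billiard map in $(r,\varphi)$ coordinates, whose sign structure immediately forces both invariance statements. I would first recall the standard computation (see e.g.\ \cite[Chapter~2]{ChM}): at any point $z=(r,\varphi)\in\cs$ where $\cm$ is smooth, setting $\cm(z)=(r_1,\varphi_1)$, writing $\tau=\tau(z)$ for the free-flight time and $\curv,\curv_1$ for the boundary curvatures at the two collision points, one obtains
\[
\deh\cm_z = -\frac{1}{\cos\varphi_1}\begin{pmatrix}\tau\curv + \cos\varphi & \tau \\ \curv_1(\tau\curv+\cos\varphi) + \curv\cos\varphi_1 & \tau\curv_1 + \cos\varphi_1\end{pmatrix}.
\]
The dispersing assumption gives $\curv,\curv_1>0$ uniformly at regular collision points, while wherever $\deh\cm$ is defined one also has $\tau>0$ and $\cos\varphi,\cos\varphi_1>0$. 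Consequently every entry inside the parenthesis is strictly positive.

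Given a nonzero vector $(\delta r,\delta\varphi)$ in the increasing cone, its two components share a weak sign (both $\geq 0$ or both $\leq 0$) with at least one strictly nonzero. Multiplying the positive matrix above by such a vector produces a vector whose two components have the same strict sign; the overall factor $-1/\cos\varphi_1<0$ then flips that common sign but preserves the sign of the product. Hence $\deh\cm_z(\delta r,\delta\varphi)$ lies in the open increasing cone $\{\delta r\,\delta\varphi>0\}$. The crucial point is that strict positivity of \emph{every} matrix entry handles the boundary of the cone (where one of $\delta r,\delta\varphi$ vanishes) together with the interior.

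For $\deh\cm\inv$, rather than repeat the computation I would invoke the time-reversal symmetry of the billiard: the involution $\mathcal{I}(r,\varphi)=(r,-\varphi)$ satisfies $\cm\inv = \mathcal{I}\circ\cm\circ\mathcal{I}$. Its differential $\deh\mathcal{I}$ flips the sign of $\delta\varphi$ only, and therefore exchanges the increasing and decreasing cones. Composing accordingly, strict invariance of the increasing cone under $\deh\cm$ translates at once to strict invariance of the decreasing cone under $\deh\cm\inv$.

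No real obstacle arises; the argument is classical (going back to \cite{bsch1,bsch2}) and ultimately rests only on the fact that dispersing boundaries have strictly positive curvature. The sole subtlety worth flagging is the boundary-of-cone case, which is precisely why one needs every entry of the matrix above to be strictly positive rather than merely non-negative.
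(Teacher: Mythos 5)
The paper states this lemma without proof, deferring to the cited references (\cite{bsch1,bsch2,ch,ChM}); your argument is precisely the standard one found there, based on the sign structure of the explicit derivative matrix of the billiard map together with time-reversal symmetry, and it is correct. In particular you correctly identify the only delicate point, namely that strict positivity of all matrix entries is what pushes boundary vectors of the cone (those with $\delta r=0$ or $\delta\varphi=0$) into the open cone.
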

The push-forward (resp. pull-back) of the increasing cone (resp. decreasing cone) by $\cm$
defines a cone field, that is called \emph{unstable cone} (resp. \emph{stable cone}) and
denoted with $\cone^\unstable$ (resp. $\cone^\stable$).  A vector is said to be unstable
(resp. stable) if it belongs to the unstable (resp. stable) cone; likewise a smooth curve
$W\subset\cs$ is said to be \emph{unstable} or a \emph{u-curve} (resp. \emph{stable} or a
\emph{s-curve}) if the tangent vector at any point of $W$ is unstable (resp. stable).
We now collect a few known important results about dispersing billiards with
corner points.  We closely follow the exposition of \cite[Section 9]{ch}.
\begin{lem}[Transversality]\label{l_transversality}
  For any $z\in\cs$, the angle between stable and unstable cones at $z$ is uniformly
  bounded away from zero.
\end{lem}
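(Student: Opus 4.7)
The plan is to work in slope coordinates. For a nonzero tangent vector $(\delta r, \delta\varphi)\in T_z\cs$ define the slope $\pslope = \delta\varphi/\delta r \in \reals\cup\{\infty\}$. In these coordinates the increasing cone becomes the arc $[0,+\infty]$ on the projective line, and the decreasing cone becomes $[-\infty,0]$. Since $d\cm$ acts on slopes via a M\"obius transformation, the unstable cone $\cone^\unstable(z)$ --- by definition the image under $d\cm$ of the increasing cone at $\cm^{-1}z$ --- is represented by a closed sub-arc $[\pslope^\unstable_-(z),\pslope^\unstable_+(z)]$. The goal reduces to showing that the two slope intervals representing $\cone^\unstable(z)$ and $\cone^\stable(z)$ are disjoint with a uniform gap as $z$ varies in $\cs$.

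I would first recall the classical explicit formula for the action of $d\cm$ on slopes at a dispersing collision (see \cite[Section 3.8]{ChM}). Writing $z_0 = \cm^{-1}z$, for every $\pslope_0 \in [0,+\infty]$ one obtains
\begin{equation*}
\pslope_1 \;=\; \curv(z) + \cos\varphi(z)\cdot G(\pslope_0;z_0),
\end{equation*}
where the function $G(\cdot;z_0)$ satisfies $0 \leq G(\cdot;z_0) \leq 1/\tau(z_0)$. The dispersing hypothesis $\curv\geq \curv_{\min}>0$ then yields the uniform lower bound $\pslope^\unstable_-(z)\geq\curv_{\min}$.

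For the upper bound, if $\tau(z_0)$ were bounded below by a positive constant, the same formula would immediately give $\pslope^\unstable_+(z)\leq\curv_{\max} + 1/\tau_{\min}$. The only way $\tau(z_0)$ can degenerate is if $z_0$ lies in the middle of an immediate corner sequence; assumption (A1) uniformly bounds the length of such sequences by some integer $\Ncorner$. Since $\cone^\unstable(z)$ is contained in the $n$-th image under $d\cm$ of the increasing cone at $\cm^{-n}z$ for every $n\geq 1$, one takes $n\leq\Ncorner+1$ to reach a preimage outside all corner sequences; bounded horizon (A2), together with (A1), then guarantees a uniform lower bound on the corresponding flight time, and iterating the formula yields the desired uniform upper bound on $\pslope^\unstable_+$.

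The analogous argument applied to $\cm^{-1}$, which is itself a dispersing billiard in reversed time, produces the symmetric bounds for the stable cone: $\pslope^\stable$ lies in some uniform interval $[-C, -\curv_{\min}]$. The two slope intervals are then separated by a uniform positive gap, which translates directly into a uniform positive lower bound on the angle between $\cone^\unstable(z)$ and $\cone^\stable(z)$. \textbf{Main obstacle:} the delicate step is the uniform upper bound on $\pslope^\unstable_+$ near corner points, where consecutive short flight times inside a corner sequence could in principle compound and blow the slope up. Assumption (A1) is precisely what precludes this by bounding corner-sequence lengths; the remaining bookkeeping, while standard, has to be carried out carefully to ensure that the resulting bounds are uniform in $z$.
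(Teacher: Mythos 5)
First, a remark on the comparison itself: the paper does not prove this lemma --- it is quoted from the literature (the exposition explicitly follows \cite[Section 9]{ch}, and the paper states at the start of Section~\ref{s_definition} that proofs of such lemmata are to be found in the references). So there is no in-paper argument to measure you against, and I can only assess your proposal on its own terms. Your setup is correct: the slope formula $\pslope_1=\curv+\cos\varphi\cdot G(\pslope_0;z_0)$ with $0\le G\le 1/\tau(z_0)$ is the standard one, the lower bound $\pslope^\unstable_-\ge\curv_{\min}>0$ (and its mirror image for the stable cone) is right, and your strategy does prove the lemma for dispersing billiards \emph{without} corner points, where $\tau\ge\tau_{\min}>0$ forces both cones into a fixed compact set of slopes.

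The corner-point step, however, contains a genuine error and aims at a bound that cannot hold. The inclusion you invoke is reversed: since $\deh\cm$ maps the increasing cone strictly into itself, the $n$-step image of the increasing cone at $\cm^{-n}z$ is a \emph{subcone} of $\deh\cm$ applied to the increasing cone at $\cm\inv z$, i.e.\ a subcone of $\cone^\unstable(z)$ --- not a supercone. Bounding the slopes of that smaller cone therefore says nothing about $\pslope^\unstable_+(z)$. Nor can the argument be repaired, because $\pslope^\unstable_+(z)$ equals exactly $\curv(z)+\cos\varphi(z)/\tau(\cm\inv z)$ (the image of the vertical direction, which belongs to the increasing cone at $\cm\inv z$), and this quantity is genuinely unbounded: at an intermediate collision of a corner series near an acute corner, $\tau(\cm\inv z)\to 0$ while $\cos\varphi(z)$ remains of order one. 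Symmetrically, the steep edge of $\cone^\stable(z)$ has slope $-\curv(z)-\cos\varphi(z)/\tau(z)$ and degenerates when $\tau(z)\to 0$. So the actual content of the lemma in the corner case is not that the cones stay away from the vertical --- they do not --- but that the gap between them survives when both edges steepen simultaneously: the angle between the two steep edges is comparable to $\bigl(\tau(\cm\inv z)+\tau(z)\bigr)/\cos\varphi(z)$, and one must control precisely this quantity at collisions where both the incoming and the outgoing free path are short, i.e.\ in the interior of a corner series. Your proposal identifies the corner sequences as the obstacle but never engages with this estimate, so the essential step is missing.
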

\begin{rmk}
  If the billiard table $\btable$ has no corner points, a stronger version of the above
  lemma holds, that is, for any $z,z'\in\cs$, the angle between the stable cone at $z$ and
  the unstable cone at $z'$ is uniformly bounded away from zero.  In any case the
  transversality condition stated in Lemma~\ref{l_transversality} suffices for our
  purposes.
\end{rmk}
\begin{lem}[Structure of singularities]\label{l_alignment}
  For any $l\in\integers$, the set $\sing_l$ is a finite union of $C^3$ smooth curves;
  if $l>0$, then such curves are unstable, otherwise if $l<0$ they are stable.  A point
  $z\in\sing_{n,m}$ is said to be \emph{simple} if a sufficiently small neighborhood of
  $z$ intersects non-trivially only one smooth curve of $\sing_{n,m}$ and \emph{multiple}
  otherwise. For fixed $n<m$ there exist only finitely many multiple points.
\end{lem}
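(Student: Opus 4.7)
The plan is to induct on $|l|$, using that $\cm^{\pm 1}$ is a $C^3$ diffeomorphism (possibly multi-valued with finitely many branches at corner points, per (A1) and the corner-sequence bound) on the complement of its singularity set $\sing_{\mp 1,0}$.

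The base case $l = 0$ is direct: $\grazing_0$ is the finite union of horizontal arcs $\Gamma_i \times \{\pm\pi/2\}$, each $C^3$ since $\Gamma_i$ is, and $\corner_0$ is a finite union of vertical segments, one per corner point of $\btable$. For the inductive step with $l \geq 1$, each smooth arc $\gamma \subset \sing_{l-1}$ meets $\sing_{-1,0}$ transversally, and hence in finitely many points, by the alignment claim of the inductive hypothesis combined with Lemma~\ref{l_transversality}; this cuts $\gamma$ into finitely many sub-arcs, each $C^3$-mapped by the appropriate branch of $\cm$ to a smooth arc of $\sing_l$. Alignment for $l = 1$ is the classical dispersing-billiard alignment lemma: a direct computation of $\deh\cm$ on the horizontal tangent vectors to $\grazing_0$ and vertical tangent vectors to $\corner_0$, using the explicit billiard derivative formula and the strictly positive lower bound on $\curv$, places the images strictly inside the unstable cone (cf.~\cite[Section 4.5]{ChM}). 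For $l > 1$, strict invariance of the unstable cone under $\deh\cm$ (Invariant Cones Lemma) carries the property forward; the case $l < 0$ is symmetric, via $\cm^{-1}$ and the stable cone.

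For the multiple-points claim, $\sing_{n,m}$ is a finite union of $C^3$ smooth arcs and it suffices to bound pairwise intersections. Stable/unstable intersections are transversal by Lemma~\ref{l_transversality}, hence isolated and thus finite on the compact phase space $\cs$. Intersections between an arc of $\sing_0$ (horizontal or vertical) and a stable or unstable arc are again transversal, since after one iterate the stable/unstable tangents lie strictly inside the decreasing/increasing cone. For same-type intersections I would argue combinatorially: two distinct smooth components of $\sing_l$ arise from distinct branch-sequences of $\cm$ applied to smooth components of $\sing_0$, and this combinatorial distinctness should prevent tangential coincidence along a non-trivial arc, leaving only finitely many isolated crossings.

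The main obstacle I anticipate is making this rigidity step precise: ruling out tangential coincidence of two distinct smooth components of $\sing_l$ along an arc without appealing to analyticity. My preferred approach is to realize each unstable arc locally as the graph of a $C^3$ function over $r$ (available because, after one iterate, unstable tangents lie in the interior of the increasing cone, hence are bounded away from vertical) and to leverage uniqueness for the ODE defining the forward-invariant unstable line field along iterates of $\cm$, so that coincidence of two such graphs on an open set would propagate to coincidence on the maximal domain of their common branch-sequence, contradicting distinctness.
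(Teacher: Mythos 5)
First, a point of comparison: the paper offers no proof of this lemma. It is one of the background facts quoted at the start of Section~\ref{s_definition}, whose proofs are delegated to the references \cite{bsch1,bsch2,ch,ChM}; the companion statement Lemma~\ref{l_portrait}, which encodes the resulting cell structure of $\sing_{-n,0}$, is likewise quoted from \cite{Bun} and \cite[Lemma~8.6]{bsch1}. So your attempt can only be measured against the standard argument in those references. Your induction for the first two claims (finitely many $C^3$ arcs, alignment via strict cone invariance) is essentially that standard argument and is fine, with one caveat: transversality of $\sing_{l-1}$ to $\sing_{-1,0}$ is not as automatic as you assert. In the presence of corner points, curves of $\sing_{-1}$ can join $\corner_0$ \emph{tangentially} (the paper states this explicitly right after Lemma~\ref{l_portrait}, for immediate grazing collisions), because the slope of a stable curve degenerates as the free path $\tau\to0$ near an acute corner; moreover Lemma~\ref{l_transversality} only controls the angle between the two cones \emph{at the same point}, and the Remark following it warns that the uniform two-point version fails for tables with corners. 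These tangencies occur only at terminal points of the curves, so finiteness survives, but ``transversal, hence finitely many'' is not the correct justification there.

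The genuine gap is the one you flag yourself: finiteness of the multiple points arising from intersections of two curves of the \emph{same} type (two stable curves of $\sing_{-l}$ and $\sing_{-l'}$, say), where transversality gives nothing. Your proposed repair does not close it, for two reasons. First, ruling out coincidence along an arc is not enough: two distinct $C^3$ curves that agree on no arc can still intersect in infinitely many isolated points accumulating at a tangency, so non-coincidence does not yield finiteness. Second, the ODE-uniqueness idea has no footing: singularity curves are not integral curves of a line field --- the stable/unstable ``direction'' at a point is only a cone unless an entire semi-orbit is specified, and the various branches of $\cm^{-l}\grazing_0$ do not solve a common first-order ODE, so there is no uniqueness statement to invoke. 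The standard proof avoids the issue structurally rather than analytically: one shows by induction that the new curves of $\sing_{-(l+1)}\setminus\sing_{-l,0}$ lie inside the finitely many open cells of $\cs\setminus\sing_{-l,0}$ and \emph{terminate} on the older curves; inside each cell the new configuration is the diffeomorphic preimage, under a branch of $\cm$, of a piece of $\sing_{-l,0}$, so its multiple points are in bijection with some of those of $\sing_{-l,0}$, and the only genuinely new multiple points are the finitely many terminal points on the cell boundaries. The only intersections one ever counts directly are stable-versus-unstable (or versus $\sing_0$) ones, where transversality does apply. If you want a self-contained proof, you should set up this cell-complex induction --- which is exactly the content of Lemma~\ref{l_portrait} --- rather than attempt a rigidity argument for same-type curves.
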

\begin{lem}[Expansion of unstable vectors (see {\cite[Lemma~9.1]{ch}})]
  Let $z=(r,\varphi)\in\cs$ and $\cm z=(r',\varphi')\in\cs$.  There exists a constant
  $\Const>0$, which depends on the billiard table only, so that, for any
  $v\in\cone^\unstable(z)$:
  \[
  |\deh\cm v|\geq \frac{\Const}{\cos\varphi'}|v|
  \]
\end{lem}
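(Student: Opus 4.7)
The plan is to start from the standard explicit form of the differential of the billiard map in $(r,\varphi)$ coordinates, obtained by composing the free flight with the reflection law (see e.g.\ \cite[Ch.~2]{ChM}):
\[
\deh\cm(z) = -\frac{1}{\cos\varphi'}\begin{pmatrix} \tau\curv + \cos\varphi & \tau \\ \curv'(\tau\curv + \cos\varphi) + \curv\cos\varphi' & \curv'\tau + \cos\varphi'\end{pmatrix}.
\]
The desired factor $1/\cos\varphi'$ is already manifest; denoting the right-hand matrix by $M$, it remains to prove a uniform lower bound $|Mv| \geq \Const\,|v|$ for any $v \in \cone^\unstable(z)$, with $\Const$ depending only on the billiard table.

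Next I would exploit the positivity made available by the unstable cone. Writing $v = (\delta r, \delta\varphi)$, invariance of $\cone^\unstable$ forces $\delta r\,\delta\varphi \geq 0$; since every entry of $M$ is also nonnegative, no sign cancellation occurs in $|Mv|^2$. A direct expansion, in which one drops all the nonnegative cross-terms and uses $\curv' \geq \curv_{\min}$, yields
\[
|Mv|^2 \geq \bigl[(1+\curv_{\min}^2)(\tau\curv + \cos\varphi)^2 + \curv_{\min}^2\cos^2\varphi'\bigr]\delta r^2 + \bigl[(1+\curv_{\min}^2)\tau^2 + \cos^2\varphi'\bigr]\delta\varphi^2.
\]
The goal $|Mv|^2 \geq \Const^2 |v|^2$ therefore reduces to the claim that both square brackets are bounded below by a positive constant independent of $z$.

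The main obstacle is establishing the following geometric dichotomy, and it is here that the no-cusps assumption (A1) enters decisively: \emph{there exist $\tau_0,\,c_0 > 0$, depending only on the billiard, such that for every $z \in \cs$ either $\tau(z) \geq \tau_0$, or $\cos\varphi' \geq c_0$}. Indeed, if $\tau_0$ is chosen smaller than the minimum separation between any pair of walls that do not share a common corner, then $\tau(z) < \tau_0$ forces $z$ and $\cm z$ to lie on two walls meeting at a corner with interior angle $\gamma \geq \gamma_{\min} > 0$; a direct planar-geometry argument, tracking the fact that the segment from $z$ to $\cm z$ must traverse the interior wedge of aperture $\gamma$, then shows that the impact angle at $\cm z$ is controlled in terms of $\gamma_{\min}$, yielding a lower bound $\cos\varphi' \geq c_0$. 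Once this dichotomy is in place, in the first branch ($\tau \geq \tau_0$) the two brackets are bounded below by $(1+\curv_{\min}^2)(\tau_0\curv_{\min})^2$ and $(1+\curv_{\min}^2)\tau_0^2$, while in the second branch ($\cos\varphi' \geq c_0$) they are bounded below by $\curv_{\min}^2 c_0^2$ and $c_0^2$; taking the minimum over the two cases gives $|Mv|^2 \geq \Const^2 |v|^2$, and hence $|\deh\cm v| \geq (\Const/\cos\varphi')\,|v|$.
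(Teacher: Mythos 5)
The paper does not prove this lemma at all --- it is quoted verbatim from \cite[Lemma~9.1]{ch} --- so your argument has to stand entirely on its own. Your opening steps are fine: the matrix form of $\deh\cm$ is the standard one, every entry of $M$ is nonnegative for a dispersing table, and dropping the cross terms for $\delta r\,\delta\varphi\geq 0$ is legitimate, so the two displayed brackets are correct lower bounds for the coefficients of $\delta r^2$ and $\delta\varphi^2$. The fatal step is the ``geometric dichotomy'': \emph{it is false in the presence of corner points}, and its failure is precisely the phenomenon this paper is about. Model a corner of interior angle $\gamma\in(0,\pi)$ by two walls along the rays $\theta=0$ and $\theta=\gamma$, and consider the trajectory leaving the first wall at distance $\epsilon^{2}$ from the apex and hitting the second wall at distance $\epsilon$ from the apex. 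Then $\tau\approx\epsilon$ while $\cos\varphi'=\epsilon^{2}\sin\gamma/|\cm z-z|\approx\epsilon\sin\gamma$, so $\tau$ and $\cos\varphi'$ tend to $0$ \emph{simultaneously}. The impact angle at $\cm z$ is not controlled by $\gamma_{\min}$ alone: it degenerates as the ratio of the two distances to the corner degenerates. This is exactly an ``immediate grazing collision'' in the paper's terminology, and the paper explicitly asserts that such collisions occur (they are the reason nearly grazing sectors at corners need separate treatment throughout Section~3).

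The gap is not repairable within your scheme, because the intermediate statement you are really proving --- $|Mv|\geq \Const|v|$ for \emph{every} $v$ with $\delta r\,\delta\varphi\geq0$ --- is itself false in this regime: testing on $v=(0,1)$ gives $|Mv|^{2}=\tau^{2}+(\curv'\tau+\cos\varphi')^{2}\to0$. One cannot dismiss nearly vertical vectors either, since the maximal slope of $\cone^\unstable(z)$ is $\curv+\cos\varphi/\tau^{-}$ with $\tau^{-}$ the \emph{previous} free path, and $\tau^{-}$ can also be arbitrarily small in a corner series. A correct proof must therefore use the unstable cone more finely than through the sign condition: one writes the slope of $v$ as $\curv+\mathcal{B}\cos\varphi$, where $\mathcal{B}\geq0$ is the curvature of the incoming divergent wavefront (bounded by $1/\tau^{-}$), computes the expansion exactly in the $p$-metric as $1+\tau\mathcal{B}^{+}$ with $\mathcal{B}^{+}=\mathcal{B}+2\curv/\cos\varphi$, and converts back to the Euclidean metric so that the potentially large factor $\sqrt{1+s^{2}}$ in the denominator is cancelled against $1+\tau\mathcal{B}^{+}$ in the numerator, rather than being estimated separately as you do. As written, your argument is a valid proof only for smooth (corner-free) dispersing billiards with bounded horizon, where $\tau\geq\tau_{\min}>0$ makes the dichotomy trivial --- i.e.\ it proves the case this paper is not concerned with.
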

In particular the above lemma ensures that, if $\cm z\in\sst_k$, then the expansion rate
along unstable vectors at $z$ is bounded below by $\const k^{2}$.  This fact will be
crucial for the proof of Lemma~\ref{l_grazing}.  Notice that, even if the expansion rate
of unstable vectors diverges as $\varphi\to\pm\pi/2$, this divergence is in fact integrable; more
precisely we can show that
\begin{lem}[Maximal expansion of u-curves (see
  {\cite[Exercise~4.50]{ChM}})]\label{l_maximalExpansion}
  There exists a constant $\constMaxLen$, which depends on $\btable$ only, so that for any
  u-curve $W$ and any connected component $W'\subset \cm W$, we have:
\[
|W'|\leq \constMaxLen |W|^{1/2}.
\]
\end{lem}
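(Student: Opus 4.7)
\emph{Strategy.} The plan is to estimate $|W'|$ by controlling how the image angle $\varphi'$ of $\cm z$ varies along $W$, and then to recover the length $|W'|$ from the range of $\varphi'$ using that $W'$ is a u-curve. The square-root behavior comes from the elementary observation that $\sin\varphi'$, unlike $\varphi'$ itself, varies at a \emph{bounded} rate along $W$---a reflection of the integrability of the $1/\cos\varphi'$ singularity of the expansion rate.

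\emph{Main computation.} Let $\ell$ denote arclength on $W$, $u$ its unit tangent vector, and view $\varphi'$ as a function on $W$ via $\cm$.  The standard upper-bound companion to the preceding expansion lemma, which reads directly off the explicit entries of $\deh\cm$ and holds uniformly under (A0)--(A2), gives $|\deh\cm\, u|\leq \Const/\cos\varphi'$; in particular the $\varphi'$-component satisfies $|\deh\varphi'/\deh\ell|\leq \Const/\cos\varphi'$. Consequently
$$
\left|\frac{\deh(\sin\varphi')}{\deh\ell}\right| \;=\; \cos\varphi' \cdot \left|\frac{\deh\varphi'}{\deh\ell}\right| \;\leq\; \Const,
$$
so integrating along $W$ yields $|\sin\varphi'_2-\sin\varphi'_1|\leq \Const |W|$ for any two points of $W$, where $\varphi'_i$ denote their image angles. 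Combined with the elementary trigonometric inequality
$$
\sin\varphi'_2 - \sin\varphi'_1 \;\geq\; \frac{2}{\pi^2}(\varphi'_2-\varphi'_1)^2 \qquad \text{for } -\pi/2\leq \varphi'_1\leq \varphi'_2\leq \pi/2,
$$
(which follows from $\sin\varphi'_2-\sin\varphi'_1 = 2\cos\frac{\varphi'_1+\varphi'_2}{2}\sin\frac{\varphi'_2-\varphi'_1}{2}$, Jordan's inequality $\sin u\geq (2/\pi)u$ on $[0,\pi/2]$, and the simple estimate $(\varphi'_2-\varphi'_1)/2\leq \pi/2 - |(\varphi'_1+\varphi'_2)/2|$), this yields $\varphi'_2-\varphi'_1\leq \Const |W|^{1/2}$.

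\emph{Conclusion and obstacle.} Since $\btable$ is strictly dispersing with curvature bounded away from zero, the unstable cone $\cone^\unstable$ has slopes $\deh\varphi'/\deh r'$ bounded between two strictly positive constants; $W'$ is therefore strictly monotone in $\varphi'$ with $\deh s'\leq \Const\,\deh\varphi'$, and
$$
|W'| \;\leq\; \Const \, (\varphi'_2-\varphi'_1) \;\leq\; \constMaxLen\, |W|^{1/2},
$$
which is the claim. The only delicate points are verifying the upper bound $|\deh\cm\, v|\leq \Const/\cos\varphi'$ and the uniform positive-slope bounds on $\cone^\unstable$; both are classical for smooth dispersing billiards, and the content of assumptions (A0)--(A2) is precisely that they persist at non-cusp corner collisions, since the $1/\cos\varphi'$ factor is the only source of unboundedness of $\deh\cm$ and depends only on the outgoing angle (uniformly across all branches of $\cm$ at improper corner collisions).
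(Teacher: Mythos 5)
Your argument is correct and is essentially the standard one behind the cited reference (the paper itself offers no proof of this lemma, only the pointer to \cite[Exercise~4.50]{ChM}): bound $|\deh(\sin\varphi')/\deh\ell|$ by a constant using $\|\deh\cm\|\leq \Const/\cos\varphi'$, convert the resulting $O(|W|)$ oscillation of $\sin\varphi'$ into an $O(|W|^{1/2})$ oscillation of $\varphi'$, and recover $|W'|$ from the $\varphi'$-range via the cone alignment. One small caveat: in the presence of corner points the slope of $\cone^\unstable$ is \emph{not} bounded above uniformly (the upper edge of the cone has slope of order $\kappa+\cos\varphi/\tau_-$, and $\tau_-$ can be arbitrarily small in corner series), so your parenthetical claim of two-sided slope bounds is an overstatement --- but your estimate $\deh s'\leq \Const\,\deh\varphi'$ only uses the lower slope bound $\kappa\geq\kappa_{\min}>0$, which does hold, so the proof is unaffected.
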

\begin{lem}[Hyperbolicity, see {\cite[Lemma~9.2]{ch}}]\label{l_hyperbolicity} The map $\cm$ is uniformly hyperbolic
  in the Euclidean metric, that is, there exist $c>0$ and $\minLambda>1$ so that for any
  $v\in\cone^\unstable$ we have
  \begin{equation*}
    |\deh\cm^n(v)|\geq c\inv\minLambda^n.
  \end{equation*}
  A similar statement holds for stable vectors: for any $v\in\cone^\stable$ we have
  $|\deh\cm^{-n}(v)|\geq c\inv\minLambda^n$.
\end{lem}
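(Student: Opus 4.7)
The plan is to establish uniform expansion first in the adapted \emph{p-metric} $\|v\|_p := \cos\varphi\,|v|$ on unstable vectors (which is well-defined as a norm comparable to the Euclidean one away from grazing, since by Lemma~\ref{l_transversality} the slope of $\cone^\unstable$ is uniformly bounded), and then transfer to the Euclidean metric using the one-step expansion bound from the previous lemma.

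To obtain p-expansion, I would parametrize an unstable vector $v\in\cone^\unstable(z)$ at $z=(r,\varphi)$ by the (nonnegative) curvature $\pslope^+(z,v)$ of the infinitesimal outgoing wavefront it represents. At any regular collision, the mirror equation gives $\pslope^+ = \pslope^- + 2\curv(r)/\cos\varphi \geq 2\curv_{\min}/\cos\varphi$, and during the subsequent free flight of time $\tau(z)$ the outgoing wavefront stretches by $1+\tau(z)\pslope^+$, yielding
\[
\|\deh\cm v\|_p \geq \bigl(1 + \tau(z)\pslope^+(z,v)\bigr)\|v\|_p.
\]
Using bounded horizon (A2) together with the no-cusps assumption (A1), I would then argue that there exist $N\in\naturals$ and $\eta>0$ such that along any orbit the product $\prod_{k=0}^{N-1}(1+\tau(\cm^k z)\pslope^+(\cm^k z, \deh\cm^k v))$ is bounded below by $1+\eta$; iterating this gives uniform p-expansion $\|\deh\cm^n v\|_p \geq C\Lambda_p^n \|v\|_p$ for some $C>0$, $\Lambda_p>1$.

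To transfer the estimate back to the Euclidean metric, I would absorb the degenerate factor $\cos\varphi_n$ in $|\deh\cm^n v| = \|\deh\cm^n v\|_p/\cos\varphi_n$ using the previous lemma applied to the very first step: the bound $|\deh\cm v|\geq \Const/\cos\varphi_1 |v|$ rewrites as $\|\deh\cm v\|_p \geq \Const |v|$, so starting the p-expansion from $n=1$ yields
\[
|\deh\cm^n v| = \frac{\|\deh\cm^n v\|_p}{\cos\varphi_n} \geq \frac{C\Lambda_p^{n-1}\|\deh\cm v\|_p}{\cos\varphi_n} \geq C\Const\,\Lambda_p^{n-1}\,|v|,
\]
which is the desired bound with $\minLambda := \Lambda_p$ and $c\inv := C\Const/\Lambda_p$. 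The stable analogue follows by applying the same argument to $\cm\inv$, which is of the same nature as $\cm$.

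The main obstacle is the presence of corner points: unlike the smooth case, $\tau(z)$ admits no positive lower bound, so the one-step p-expansion factor $1+\tau \pslope^+$ can be arbitrarily close to $1$ inside a corner sequence. Establishing the $N$-step uniform estimate therefore requires showing that any corner sequence has uniformly bounded length (guaranteed by (A1), as recalled after Definition of corner sequences), and that the divergence $\pslope^+\sim 1/\cos\varphi$ at near-grazing collisions inside a corner sequence compensates for the shrinking free flights. Bounded horizon (A2) then guarantees a uniform minimum free flight between distinct corner sequences, closing the argument.
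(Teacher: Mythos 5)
The paper offers no proof of this lemma: it is quoted verbatim from \cite[Lemma~9.2]{ch}, and the argument there (see also \cite{ChM}) is exactly the route you propose --- uniform expansion in the p-metric via the wavefront curvature $\pslope$ and the mirror equation, then a transfer to the Euclidean metric using the one-step bound $|\deh\cm v|\geq \Const|v|/\cos\varphi'$. Your reduction of the Euclidean statement to a p-metric statement is correct, and you correctly identify the only genuine difficulty, namely that $\tau$ has no positive lower bound inside corner series.

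However, the step you propose for closing that difficulty would fail as stated. Collisions inside a corner series need \emph{not} be near-grazing, so the claimed compensation ``$\pslope^+\sim 1/\cos\varphi$ blows up while $\tau\to 0$'' is not available in general: for a corner of internal angle $\pi/2$ entered near the bisector, every reflection occurs at $\varphi\approx\pi/4$ while all the free paths $\tau_k$ shrink to $0$ as the trajectory approaches the vertex, so each factor $1+\tau_k\pslope_k^+$ tends to $1$. The correct mechanism needs no compensation \emph{within} the series: since (A1) bounds the number $M$ of reflections in any corner series, a compactness argument shows that among any $M+1$ consecutive collisions the total flight time satisfies $\sum_k\tau_k\geq\hat\tau>0$ (otherwise a limiting trajectory would exhibit a corner series of length $M+1$). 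Combined with $\pslope_k^+\geq 2\curv_{\min}$ --- either by picking the index with $\tau_{k_0}\geq\hat\tau/(M+1)$, or more elegantly via the telescoping inequality $\prod_k(1+\tau_k\pslope_k^+)\geq 1+\pslope_0^+\sum_k\tau_k$, which follows from $1/\pslope_{k+1}^-=1/\pslope_k^+ +\tau_k$ --- this gives the required $(M+1)$-step expansion factor $1+\eta$. Note also that this lower bound on flight times has nothing to do with the bounded horizon assumption (A2), which is an \emph{upper} bound on $\tau$ and plays no role in this lemma; the relevant input is the geometry of the table (finitely many walls, no cusps), as recorded in Remark~\ref{r_freePathImproper}. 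With these two corrections your outline becomes the standard proof.
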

Consider a u-curve $W$: the image of $W$ by $F$ is given by the union of a finite number of
connected components, since $W$ is cut by singularities $\sing$; each component
might be further subdivided by singularities $\exsing$ into countably many pieces,
which we call \emph{H-components}.  We denote by $\{W\hc i\}$ the H-components of $\cm W$
and by $\{W\hcn{i}{n}\}$ the H-components of $\cm^n W$.
\begin{mydef}
  If $W\hc i\subset\mathbb{H}_0$, we say that $W\hc i$ is \emph{regular}; otherwise we
  call $W\hc i$ \emph{nearly grazing}.  Likewise, we say that $W\hcn i n$ is
  \emph{regular} if for any $0\leq l<n$ we have that $\cm^{-l}W\hcn i n\subset\sst_0$ and
  \emph{nearly grazing} otherwise; if $W\hcn i n$ is nearly grazing, define
  \[
  \text{rank}(W\hcn i n)=\min\{p\in\{1,\cdots,n\}\st \cm^{-(n-p)}W\hcn i n\cap\sst_0=\emptyset\}.
  \]
  For $n>0$, define the \emph{regular $n$-complexity of $W$}, denoted with $\compl_n(W)$,
  as the number of regular H-components of $\cm^n W$; if $n=0$ we set conventionally
  $\compl_0(W)=1$.  Finally, define:
  \[
  \compl_n=
  \liminf_{\delta\to0}\sup_{W:\,|W|\leq\delta}\compl_\Ncorner(W).
  \]
\end{mydef}
\begin{lem}\label{l_corner}
  There exists $\Ncorner\in\naturals$, which depends only on $\btable$, so that
  \begin{equation}\label{e_complexityAtN}
    \compl_\Ncorner<\frac{1}{3}c\inv\minLambda^\Ncorner,
  \end{equation}
  where $c$ and $\Lambda_*$ are the ones obtained by Lemma~\ref{l_hyperbolicity}.
\end{lem}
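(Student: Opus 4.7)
The plan is to bound $\compl_N$ by a combinatorial analysis of corner branching, and then to choose $N$ large enough so that this bound is dominated by $\Lambda_*^N/(3c)$. The first step is to identify the singularities that can actually separate two regular H-components of $\cm^N W$. By the definition of regularity, the preimages $\cm^{-l} W\hcn{i}{N}$ for $0 \leq l < N$ all lie in $\sst_0$, hence are uniformly bounded away from the grazing set $\grazing_0$ and from every secondary singularity $\partial\sst_{\pm k}$ with $k \geq k_0$. Consequently, two distinct regular H-components of $\cm^N W$ can only be separated by (i) corner singularities $\cm^l\corner_0$ at intermediate times $0 \leq l < N$, or (ii) boundary pieces of $\sst_0$ at the initial and final times, whose total contribution is $O(1)$.

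The second step bounds the corner-branching combinatorics. At each corner encounter along the orbit of a point of $W$, the continuation lemma produces at most two branches; so the number of regular H-components of $\cm^N W$ is at most $2^k$, where $k$ is the maximum number of corner encounters that any $N$-step orbit contained in $\sst_0$ can undergo. Standing assumption (A1) (no cusps) bounds the number of immediate collisions inside a single corner sequence by a uniform constant $K_0$, while (A2), together with the fact that $\sst_0$ is separated from grazing, allows us to control the frequency of corner sequences along any regular orbit. A careful accounting should yield a bound of the form $\compl_N \leq \Const\,\mu^N$ for some constant $\mu > 1$ depending only on $\btable$.

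The most delicate step, which I expect to be the main obstacle, is to ensure that $\mu$ is \emph{strictly} smaller than $\Lambda_*$, so that $\compl_N/\Lambda_*^N \to 0$ as $N\to\infty$. A naive dyadic bound $\mu = 2$ is in general insufficient, since $\Lambda_*$ may only slightly exceed $1$. The refinement should exploit that each corner branching at intermediate time $l$ is coupled to a uniform expansion factor along the orbit given by Lemma~\ref{l_hyperbolicity}, while the total u-length available for regular H-components inside the compact set $\sst_0$ is bounded via Lemma~\ref{l_maximalExpansion}. A length-accounting argument comparing the number of regular H-components against the available u-length in $\sst_0$ should then yield the required strict inequality, completing the proof of the lemma.
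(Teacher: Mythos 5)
There is a genuine gap, and it sits exactly where you anticipated: in passing from an exponential branching bound to something dominated by $\Lambda_*^N$. Your step 2 gives $\compl_N\leq \Const\,\mu^N$ with, at best, $\mu=2^{c}$ for some $c>0$ reflecting the frequency of corner encounters, and since $\Lambda_*$ may be arbitrarily close to $1$ no choice of $N$ makes $\mu^N<\frac13 c^{-1}\Lambda_*^N$. The proposed rescue --- a ``length-accounting argument comparing the number of regular H-components against the available u-length in $\sst_0$'' --- does not work: regular H-components have no lower bound on their length (they are cut by images of corner singularities, not by the homogeneity strips), so their number cannot be controlled by the total u-length they occupy. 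No counting scheme that treats each corner encounter as an independent factor-of-two branching can succeed here.

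The paper's resolution is structurally different: it proves that the \emph{regular} complexity grows only \emph{linearly}, $\cplex_n\leq \Xi n$ (Lemma~\ref{l_linearBound}), after which Lemma~\ref{l_corner} is immediate by choosing $N$ with $\Xi N<\frac13 c^{-1}\Lambda_*^N$ (a linear function always loses to an exponential, however slowly the exponential grows). The linear bound rests on the key geometric fact (Lemma~\ref{l_active}) that when a regular active sector is cut by the order-one singularities at a corner, \emph{at most one} of the resulting regular image sectors is again active; the others are bounded by u-curves or vertical segments and hence lie in inactive quadrants, so they are never cut again. Corner branching alone therefore does not compound: only the combination of a corner with a grazing collision can reactivate a sector, and those components are discarded as nearly grazing (their contribution is handled separately in Lemma~\ref{l_grazing}). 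Your proposal contains no mechanism of this kind. A secondary omission: the lemma is a statement about $\liminf_{\delta\to0}\sup_{|W|\leq\delta}$, so one must also transfer the pointwise sector count to a count of H-components of short u-curves; the paper does this via the local singularity portrait (Lemma~\ref{l_portrait}), transversality, and Lemma~\ref{l_maximalExpansion} to choose $\delta$ so that $W$ meets at most one multiple point and each regular sector contributes at most one regular H-component. Your write-up does not address this reduction.
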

\begin{lem}\label{l_grazing}
  For any $\eps>0$ we can choose $k_0$ large enough in the definition of homogeneity
  strips so that
  \begin{equation}\label{e_grazing}
    \liminf_{\delta\to 0} \sup_{W:\,|W|\leq\delta}{\sum_i}^*\frac{1}{\Lambda_i}<\eps,
  \end{equation}
  where ${\sum}^*$ denotes that the sum is restricted to nearly grazing components.
\end{lem}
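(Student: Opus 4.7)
The plan rests on two observations that together make the estimate almost immediate. First, by the expansion of unstable vectors lemma (and the sentence immediately following its statement), any H-component $W\hc{i}$ whose image lies in $\sst_k$ with $|k| \geq k_0$ has expansion rate $\Lambda_i \geq \const k^2$. Second, since the unstable cone at every point of $\cs$ is $\{\delta r\,\delta\varphi \geq 0\}$, every u-curve is monotone in both the $r$ and $\varphi$ coordinates; in particular, any u-curve intersects a horizontal homogeneity strip $\sst_k$ in at most one connected arc.

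With these in hand, the plan is to first decompose $\cm W$ into the connected components $W^{(1)}, \dots, W^{(N_0)}$ obtained by cutting only along the primary singularity set $\sing_{-1,0}$, temporarily ignoring the secondary singularities $\partial\sst_k$. Each $W^{(j)}$ is itself a u-curve, so by the monotonicity observation its further subdivision by the secondary singularities produces at most one nearly grazing H-component inside each strip $\sst_{\pm k}$ with $|k| \geq k_0$. Bounding each such contribution by $1/(\const k^2)$ and summing,
\[
  \sum_{i \subset W^{(j)},\, \text{n.g.}} \frac{1}{\Lambda_i}
  \;\leq\; 2 \sum_{k \geq k_0} \frac{1}{\const k^2}
  \;\leq\; \frac{C}{k_0},
\]
where the factor $2$ accounts for the two grazing regimes $\varphi \to \pm\pi/2$. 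Summing over the $N_0$ primary components yields an estimate of order $N_0/k_0$, which becomes smaller than any prescribed $\eps$ by taking $k_0$ large enough.

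The main obstacle is to control $N_0$ uniformly in $|W|$ as $|W| \to 0$. This will rely on Lemma~\ref{l_alignment}: $\sing_{-1,0}$ is a finite union of $C^3$ smooth curves with only finitely many multiple points, at each of which at most some uniformly bounded number (say $M+1$) of smooth branches meet. Combined with Lemma~\ref{l_transversality}, which guarantees that any u-curve meets the s-curves comprising $\sing_{-1,0}$ transversally, this implies that for $|W|$ sufficiently small the curve $W$ can cross at most one multiple point of $\sing_{-1,0}$, and hence $N_0 \leq M+1$, a constant depending only on $\btable$. Choosing $k_0$ so large that $C(M+1)/k_0 < \eps$ concludes the argument.
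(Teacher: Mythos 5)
Your proposal is correct in substance, and its core mechanism is the same as the paper's: each primary component of $\cm W$ is a u-curve, hence monotone in $\varphi$, so it meets each strip $\sst_{\pm k}$ in at most one arc; each such arc contributes at most $\const^{-1}k^{-2}$ to the sum, so the nearly grazing total is at most (number of primary components) times $C k_0^{-1}$, which is killed by taking $k_0$ large. Where you diverge from the paper is in bounding the number $N_0$ of primary components. The paper proves a dedicated geometric statement (Lemma~\ref{l_bounded1step}): the perturbed trajectories emanating from a short u-curve can realize at most $2(\tmax/\tau_*+1)$ distinct first collisions, because the number of improper collisions along a straight segment is at most $\tmax/\tau_*$ and each collision point spawns at most two sectors; this gives the explicit bound $2\Const(\tmax/\tau_*+1)k_0^{-1}$ and is reused later in the proof of Lemma~\ref{l_linearBound}. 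You instead invoke Lemma~\ref{l_alignment} together with transversality, which also works, but your intermediate step ``$W$ crosses at most one multiple point, hence $N_0\le M+1$'' is not quite right as written: keeping $W$ away from all but one multiple point does not by itself bound the number of \emph{simple} singularity curves that $W$ can cross. The clean version of your argument is more direct and needs neither smallness of $|W|$ nor any multiple-point analysis: by Lemma~\ref{l_alignment}, $\sing_{-1,0}$ is a \emph{finite} union of $C^3$ stable curves (plus the horizontal and vertical pieces of $\sing_0$), each of which is a decreasing graph over $r$ with slope bounded away from $0$ and $\infty$, while $W$ is an increasing graph; hence each such curve meets $W$ at most once and $N_0\le L+1$ with $L$ the total number of these curves, a constant of the table. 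With that repair your proof is complete and logically independent of Lemma~\ref{l_bounded1step}; the paper's count has the advantage of producing a constant with transparent dynamical meaning and of being the same count needed elsewhere in the argument.
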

We emphasize that Lemma~\ref{l_grazing} above is stated for a single iteration of $F$;
this will be sufficient for our purposes.  We fix $k_0$ so that Lemma~\ref{l_grazing}
holds for $\eps=\frac12c\Ncorner\inv\minLambda^{-2\Ncorner}$, where $\Ncorner$ is the one
provided in Lemma~\ref{l_corner} and $c$ and $\Lambda_*$ are the ones obtained by
Lemma~\ref{l_hyperbolicity}; the reader will find the reason for this choice in the proof
of our Main Theorem.

We will prove Lemmata~\ref{l_corner} and~\ref{l_grazing} in the
next section.  Given for granted the above two statements, we can now state and prove our
\begin{mthm}
  Let $\btable$ satisfy our Standing Assumptions (A0-A2), then
  \begin{equation}
    \label{e_expansionEstimate}
    \liminf_{\delta\to0}\sup_{W:\,|W|\leq\delta}\sum_{i}\frac{1}{\Lambda_{i,N}} < 1,
  \end{equation}
  where $N$ is the one obtained by Lemma~\ref{l_corner} and we denote by $\Lambda_{i,N}$
  the minimum expansion of $\cm^N$ on $\cm^{-N}W\hcn i N$.
\end{mthm}
\begin{proof}
  For any $n>0$ and u-curve $W$ define
  \begin{align*}
    \bdex_n(W) &= \sum_i\frac1{\Lambda_{i,n}}&
    \bdex_n(\delta) &= \sup_{W:|W|\leq\delta}\bdex_n(W);
  \end{align*}
  and set $\bdex_0(\cdot)=1$ by convention.  By Lemmata~\ref{l_corner} and~\ref{l_grazing} and
  our choice of $k_0$ made above, we know that there exists a $\delta_0$ so that
  \begin{align*}
    \sup_{W:\,|W|\leq\delta_0}\compl_\Ncorner(W)&<\frac{1}{2}c\inv\minLambda^\Ncorner&
    \sup_{W:\,|W|\leq\delta_0}{\sum_i}^*\frac{1}{\Lambda_i}&<c\Ncorner\inv\minLambda^{-2\Ncorner}.
  \end{align*}
  Recall that Lemma~\ref{l_maximalExpansion} gives an a priori bound on the length of
  $H$-components: $|W^n_i|\leq |\cm^n W|\leq \constMaxLen^n|W|^{2^{-n}}$; let us define
  $\delta_n=(\delta_0\constMaxLen^{-n})^{2^n}$.  We claim that for any u-curve $W$ with
  $|W|<\delta_n$, the following estimate holds:
  \begin{equation}\label{e_tree}
    \bdex_n(W) \leq \compl_n(W)\cdot c\minLambda^{-n} + c\Ncorner\inv\minLambda^{-2\Ncorner}\sum_{r=1}^{n} \compl_{r-1}(W)\bdex_{n-r}(\delta_{n-r}).
  \end{equation}
  In fact, a H-component $W\hcn i n$ can either be regular, or not.  By definition, the
  number of regular H-components is $\compl_n(W)$: hence their contribution to $\bdex_n$
  is bounded by the first term in~\eqref{e_tree}.  On the other hand, the contribution of
  all nearly grazing H-components of rank $r$ is bounded by the $r$-th term in the sum
  in~\eqref{e_tree} using Lemma~\ref{l_grazing}.  In particular, since $\minLambda>1$ and
  $\compl_{r-1}(W)\leq \compl_{n}(W)$ we have:
  \[
  \bdex_n(W)\leq \compl_n(W)c\left[1+\Ncorner\inv\minLambda^{-2\Ncorner}\sum_{r=1}^{n}\bdex_{n-r}(\delta_{n-r})\right].
  \]
  We can thus ensure, by induction, that $\bdex_n(\delta_n)<\minLambda^n$ for all $0\leq n
  < \Ncorner$; hence we use~\eqref{e_tree} one final time to obtain
  \[
  \bdex_\Ncorner(W)\leq 2\compl_\Ncorner(W) c\minLambda^{-\Ncorner} <1,
  \]
  which concludes the proof.
\end{proof}
\begin{cor}
  Let $\btable$ satisfy our Standing Assumptions (A0-A2), then the billiard map features
  exponential decay of correlations and the central limit theorem for H\"older-continuous
  observables.
\end{cor}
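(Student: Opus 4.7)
The plan is to reduce the corollary to the now-standard ``black box'' theorems that derive exponential mixing and a CLT from a list of geometric and analytic hypotheses on a two-dimensional piecewise hyperbolic map; the one-step expansion estimate~\eqref{e_expansionEstimate} just established by the Main Theorem is precisely the ingredient in that list which, prior to this paper, had to be postulated separately for billiards with corner points. With the Main Theorem in hand, my proof would therefore amount to verifying the remaining hypotheses from the literature and then invoking the relevant theorems verbatim.

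First, I would collect from Section~\ref{s_definition} the structural facts we already have: $\cm$ is uniformly hyperbolic (Lemma~\ref{l_hyperbolicity}) with transversal stable and unstable cones (Lemma~\ref{l_transversality}); its singularity set is a finite, suitably aligned, union of $C^3$ curves (Lemma~\ref{l_alignment}); and the maximal expansion of u-curves under one iteration is integrably controlled (Lemma~\ref{l_maximalExpansion}). The remaining prerequisites --- bounded distortion on each homogeneity strip $\sst_k$, absolute continuity of the (un)stable holonomies, and the alignment of secondary singularities with the stable cone --- are precisely what the introduction of the homogeneity strips $\sst_{\pm k}$ was designed to achieve, and are verified in \cite{bsch1,bsch2,ch,ChM} without using any hypothesis beyond (A0--A2). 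Packaging all of this together, one sees that $\cm$ satisfies all the assumptions of \cite[Theorem~5.52]{ChM} except the one-step expansion estimate, which is now supplied by our Main Theorem.

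The conclusion then follows in two standard steps. Applying the cited Growth Lemma of Chernov yields equidistribution of the iterates of u-curves, which in turn is the key input to construct a Young tower with exponentially decaying tails of the return time, following the scheme in \cite[Chapters~5--7]{ChM}. Once such a tower is available, exponential decay of correlations and the central limit theorem for H\"older-continuous observables are obtained through Young's machinery, again cited verbatim. The main --- and really the only --- obstacle in making this argument rigorous is the bookkeeping needed to confirm that the hypotheses in the cited statements are stated in a form compatible with the multi-valued definition of $\cm$ at corner points adopted here; since corner points form a $\nu$-null set and the singularity curves meeting at them are, as in the smooth case, piecewise $C^3$ and transverse to the unstable cone, this verification is routine.
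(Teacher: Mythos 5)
Your proposal is correct and takes essentially the same route as the paper: the paper's proof simply observes that the results of \cite{ch} (stated there under a sub-exponential complexity assumption) in fact only use the one-step expansion estimate, which the Main Theorem now supplies. Your more detailed accounting of the remaining hypotheses and the Young-tower machinery is a faithful expansion of that same one-line reduction.
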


\begin{proof}
  The statement is proven in \cite{ch} under the assumption that the complexity is
  sub-exponential, but if fact only the statement of our Main Theorem is used.
\end{proof}

As further corollaries, many other strong statistical properties are satisfied by the
billiard map under our assumptions.  In fact, in \cite{ch} a \emph{Young tower}
(introduced in the seminal work of Young \cite{Y2}) with an exponential tail of the return
times is constructed. For such ``Young systems'', many further statistical properties have
been proved, including large deviations (\cite{RY}, \cite{MN2}), local limit laws
(\cite{SV}), almost sure invariance principles (\cite{MN1}), and Berry-Ess\'een type
theorems (\cite{P}).  It is worthwhile to mention that the same strong statistical
properties can also be obtained by means of the more geometrical \emph{coupling} approach
(introduced in \cite{yo99} and further developed in \cite{cd,ch06b}); the reader can find
a detailed exposition of the application of this technique to billiard systems in
\cite[Section 7]{ChM}.

\section{Proof of main technical lemmata}
In smooth dispersing billiards, the free path $\tau(r,\varphi)$ is always bounded away
from zero; however, in our situation, $\tau(r,\varphi)$ can become arbitrarily small if
$r$ approaches an acute corner point, where corner series may occur.  On the other hand,
for any fixed small $\varrho>0$, there exists $\tau_*(\varrho)>0$ so that
$\tau(r,\varphi)\geq\tau_*(\varrho)$ if $r$ does not belong to the $\varrho$-neighborhood
of any acute corner point; in particular
\begin{rmk}\label{r_freePathImproper}
  The free path between two improper collisions is uniformly bounded away from zero by
  some $\tau_*>0$.
\end{rmk}
In order to prove Lemma~\ref{l_corner} we study how singularity curves can join at a
multiple point. For $\rho>0$, and $z\in\cs$, let us denote by $\nhbd_{z,\rho}$ the open
$\rho$-ball around $z$.  By the facts stated in the previous section, we obtain
\begin{lem}[Local singularity portrait (see also {\cite[Theorem~6.1]{Bun} or
    \cite[Lemma~8.6]{bsch1}})]\label{l_portrait}$\,$\\
  There exists a strictly positive (non-increasing) sequence $\{\rho_n\}$ so that for any
  $n>0$:
  \begin{enumerate}
  \item for any $z,z'$ distinct multiple points of $\sing_{-n,0}$ we have
    $\cl{\nhbd_{z,\rho_n}}\cap \cl{\nhbd_{z',\rho_n}}=\emptyset$;
  \item fix a multiple point $z$: the neighborhood $\nhbd=\nhbd_{z,\rho_n}$ is cut by $\sing_{-n,0}$
    in a finite\footnote{ Recall that the set $R$ does not contain the boundaries of
      homogeneity strips} number of sectors, which we call sectors \emph{of order $n$} and denote
    with $\{\sect_{i}\}_{i\in\{1,\cdots,k(n,z)\}}$; then for each $i$ the map $\cm^n|\sect_{i}$ is
    smooth.  By Lemma~\ref{l_alignment} we obtain that each sector is bounded by stable
    curves (see Figure~\ref{f_sectorsU}).
  \end{enumerate}
\end{lem}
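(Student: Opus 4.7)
The strategy is to derive both conclusions directly from Lemma~\ref{l_alignment}: for each fixed $n$, the set $\sing_{-n,0}$ is a finite union of $C^3$ smooth stable curves (the footnote reminds us that $\sing$ excludes homogeneity strip boundaries, so genuine finiteness holds), and its multiple points form a finite set. The argument is then essentially a compactness-plus-finiteness construction, carried out at each level $n$ separately.

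Fix $n>0$, enumerate the multiple points $z_1,\dots,z_M$ of $\sing_{-n,0}$ and the smooth components $C_1,\dots,C_K$. To secure item (1), I would take
\[
\rho_n < \tfrac12 \min_{j\neq j'} |z_j - z_{j'}|,
\]
so that the closed $\rho_n$-balls around distinct multiple points are pairwise disjoint. To secure item (2), I would shrink $\rho_n$ further by a finite family of positive numbers: for each $j$ and each $C_k$ that does \emph{not} pass through $z_j$, the distance $\dist(z_j,C_k)$ is strictly positive (since $C_k$ is a compact subset of $\cs$ not containing $z_j$). Requiring $\rho_n$ to lie below all these distances guarantees that only those components $C_k$ actually passing through $z_j$ can meet the ball $\nhbd_{z_j,\rho_n}$.

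With this choice, inside $\nhbd_{z,\rho_n}$ the set $\sing_{-n,0}$ consists of finitely many $C^3$ stable arcs, each emanating from $z$. These arcs split the ball into a finite number of open sectors $\sect_i$. On each $\sect_i$ the map $\cm^n$ is smooth, since by Remark~\ref{r_multi_step_sing_defined} the singularity set of $\cm^n$ is exactly $\sing_{-n,0}$ and $\sect_i$ lies in its complement. Lemma~\ref{l_alignment} then guarantees that every bounding arc of $\sect_i$ inside $\nhbd_{z,\rho_n}$ is stable. Monotonicity of the sequence $\{\rho_n\}$ is finally enforced by replacing $\rho_n$ with $\min_{k\le n}\rho_k$.

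The main technical nuisance is verifying that the number of sectors is genuinely finite when several of the stable arcs at $z$ share a tangent direction. Two distinct smooth components of $\sing_{-n,0}$ cannot agree on any open subarc, for otherwise they would have been identified as a single component; hence after shrinking $\rho_n$ still further we may assume that the only intersection point among these arcs inside $\nhbd_{z,\rho_n}$ is $z$ itself. A finite collection of simple $C^3$ arcs through a single common point then produces finitely many connected components in a small enough disk, which is all that is required.
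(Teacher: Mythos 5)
The paper does not actually prove this lemma: it is imported from the literature (the cited results of Bunimovich and of Bunimovich--Sinai--Chernov) with the remark that it follows from the facts collected in Section~2, so there is no in-paper argument to compare against. Your compactness-plus-finiteness reconstruction is the natural route: item (1) and the reduction of item (2) to the finitely many smooth components actually passing through $z$ are fine.

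However, the step you yourself flag as the main technical nuisance is exactly where your written justification fails. From the fact that two distinct smooth components of $\sing_{-n,0}$ cannot coincide on an open subarc you infer that, after shrinking $\rho_n$, they meet only at $z$ inside $\nhbd_{z,\rho_n}$. That implication is false for general $C^3$ curves: two distinct $C^3$ arcs through $z$ can cross at a sequence of points accumulating at $z$ without sharing any subarc (compare the graphs of $0$ and of $x^5\sin(1/x)$), and then every disk around $z$ is cut into infinitely many components, so finiteness of the sector decomposition would not follow. The gap is closed by using Lemma~\ref{l_alignment} more fully: any point lying on two distinct smooth components of $\sing_{-n,0}$ is, by the definition given there, a multiple point, and there are only finitely many of those; since your choice of $\rho_n$ in item (1) already guarantees that $\cl{\nhbd_{z,\rho_n}}$ contains no multiple point other than $z$, the arcs through $z$ are automatically pairwise disjoint in the punctured ball, and finitely many such arcs do cut the disk into finitely many sectors. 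Two smaller points: you should also rule out a single component leaving and re-entering the ball (which would likewise create extra components); this does not follow from smoothness alone but does follow from the fact that stable curves are uniformly transversal to the unstable cone, hence are graphs with slopes bounded away from $0$ and $-\infty$, so the distance to $z$ is monotone along each branch of the arc. And ``each sector is bounded by stable curves'' should be understood as referring to the sides meeting at $z$; the remaining side is an arc of $\partial\nhbd_{z,\rho_n}$.
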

\begin{figure}[!h]
  \centering
  \includegraphics{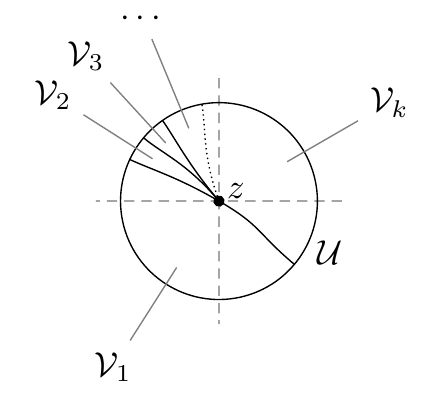}
  \caption{Singularity portrait around a point $z\in\intr\cs$.}
  \label{f_sectorsU}
\end{figure}
The description given by the above lemma is valid also for $z\in\partial\cs$, with the
difference that, in this case, the neighborhood $\nhbd$ can be either a half-ball or a
quarter-ball, depending on $z$.  In particular, if $z\in\corner_0$, we could have an
immediate collision; however, no singularity curves other than $\sing_0$ can join at $z$
in this case.  On the other hand, any neighborhood of an immediate grazing collision will
necessarily contain a curve belonging to $\sing_{-1,0}$ which joins $\corner_0$
tangentially.

Observe that the increasing quadrants (i.e. the North-East (NE) and the South-West (SW)
quadrant) cannot be cut by any future singularity; we call them \emph{inactive
  quadrants}. On the other hand, both decreasing quadrants (i.e. NW and SE) might be cut
by a future singularity; we call them \emph{active quadrants}.  Let us fix $\hcs_0$ to be
the closure of a neighborhood of $\sst_0$; for definiteness we let
$\hcs_0=\sst_{-k_0}\cup\sst_{0}\cup\sst_{k_0}$.  We say that a sector $\sect_i$ is
\emph{regular} if
\[
\lim_{\sect_i\ni z'\to
  z}F^lz'\in\hcs_0\ \text{for all}\ 0 < l \leq n;
\]
otherwise we say that $\sect_i$ is \emph{nearly grazing}.
Let us denote with $\cplex_n(z)$ the number of regular sectors of $\nhbd_{z,\rho_n}$ (meeting
at $z$); notice that $\cplex_n(z)$ makes sense also if $z$ is a simple point of
$\sing_{-n,0}$, and it can be at most $2$.  We introduce the notation
$\cplex_n=\sup_{z\in\cs}\cplex_n(z)$.

In Section~\ref{s_introduction} we introduced the \emph{complexity} of the singularity set
of $\cm^n$ as the ``number of smooth components of the singularity set of $\cm^n$ which
join at a given point $z$''.  However, from what we did so far, it is clear that it is not
really the number of singularities that matters, but rather the ``number of domains of
smoothness of $\cm^n$'' which join at $z$ (in fact, even defining the dynamics at singular
points is just an auxiliary tool to count these).  So, in the sequel, when considering
complexity growth, we will always think of these ``possible trajectories of non-singular
phase points near $z$'', and not the singularity set itself.

One of the key ideas in our present approach to studying growth of u-curves is that we
make advantage of the strong expansion occuring at nearly grazing collisions.  As a
result, we do not need to count every component (near some $z\in\cs$), into which the
phase space is cut by the singularities of $\cm^n$, but we can consider only those which
never experience such a strong expansion.  This is the content of the following lemma.
\begin{lem}\label{l_linearBound}
  There exists $\Xi$ depending only on $\btable$ so that for any $n>0$
  \[
  \cplex_n\leq\Xi n.
  \]
\end{lem}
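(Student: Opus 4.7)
The plan is to count the regular sectors at $z$ via the stable arcs of $\sing_{-n,0}$ that bound them, by showing that only arcs arising as preimages of corner singularities can bound a regular sector, and that their number grows at most linearly with $n$.

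First, I would invoke Lemma~\ref{l_alignment} and the singularity portrait Lemma~\ref{l_portrait} to conclude that every sector of $\nhbd_{z,\rho_n}$ at $z$ is bounded by stable arcs drawn from $\bigcup_{l=0}^{n}\sing_{-l}$. Since each bounding arc separates at most two sectors, the number of sectors meeting at $z$ is at most twice the number of such arcs (plus a constant absorbing the situation at $\partial\cs$), so it suffices to bound the number of bounding arcs of \emph{regular} sectors.

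Next, I would establish the following geometric restriction: if an arc $\alpha\subset\sing_{-l}$ passes through $z$ and bounds a regular sector $\sect_i$, then $\alpha$ must be a preimage of a corner component of $\sing_0$, never of a grazing component. Indeed, by the regularity condition, $\lim_{\sect_i\ni z'\to z}\cm^l z'=\cm^l z^{(i)}\in\hcs_0$; since $z\in\alpha\subset\sing_{-l}$, the limit $\cm^l z^{(i)}$ also lies in $\sing_0$; combining these,
\[
\cm^l z^{(i)}\in\hcs_0\cap\sing_0=\hcs_0\cap\corner_0,
\]
because by construction $\hcs_0=\sst_{-k_0}\cup\sst_0\cup\sst_{k_0}$ is bounded away from the grazing set $\grazing_0=\{\varphi=\pm\pi/2\}$ (the homogeneity strips $\sst_{\pm k}$ for $|k|>k_0$ fill the gap). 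Hence $\alpha\subset\cm^{-l}(\corner_0)$.

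Third, I would bound, for each $l\in\{1,\dots,n\}$, the number of such arcs through $z$ by a constant $C_0$ depending only on $\btable$. The ingredients are: (i) the injectivity of the inverse billiard map $\cm^{-1}$, which guarantees that distinct regular branches of $\cm^{l-1}$ at $z$ reach distinct phase points at time $l-1$; (ii) the fact that the corner set $\corner_0$ is a finite union of ``vertical'' segments in $\cs$, with at most two components meeting at any corner (by Assumption (A0)); and (iii) the bounded-horizon condition (A2) together with the absence of cusps (A1), which together control how regular branches can accumulate near corners. Summing over $l\in\{1,\dots,n\}$, the total number of bounding arcs of regular sectors at $z$ is at most $C_0 n$, so $\cplex_n(z)\leq 2C_0 n+1\leq \Xi n$ for $\Xi$ depending only on $\btable$. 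Taking supremum over $z$ completes the proof.

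The main obstacle lies in carrying out the per-step bound in step three: a priori the number of regular branches of $\cm^{l-1}$ at $z$ can grow with $l$, and one has to rule out that many of them can simultaneously land on $\corner_0\cap\hcs_0$ at the same time step. This is where the specific geometric features of dispersing planar billiards with bounded horizon and no cusps---together with the uniform transversality of Lemma~\ref{l_transversality} and the fact that expansion in $\hcs_0$ has bounded derivatives---must be used to prevent accumulation of corner-landings.
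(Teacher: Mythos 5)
There is a genuine gap --- in fact two. First, your second step is incorrect. From $z\in\alpha\subset\sing_{-l}$ you infer that $\lim_{\sect_i\ni z'\to z}\cm^l z'\in\sing_0$, but this limit is taken along the branch of $\cm^l$ that is smooth on $\sect_i$, and $\alpha$ being a discontinuity of $\cm^l$ means precisely that only the branch from (at least) one side of $\alpha$ maps it into $\sing_0$. Concretely, if $\alpha$ is a preimage of a grazing component, the sector on one side of $\alpha$ consists of trajectories that hit the scatterer near-tangentially at step $l$ (these are nearly grazing and may indeed be discarded), but the sector on the other side consists of trajectories that \emph{miss} that scatterer and fly on to a perfectly transversal collision with a farther wall, landing well inside $\sst_0$. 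Such a ``fly-by'' sector is regular, yet it is bounded at $z$ by a preimage of $\grazing_0$, not of $\corner_0$. So the reduction on which your whole count rests fails; and this is not a repairable technicality, since --- as the paper stresses --- it is exactly the combination of corner branching with grazing/fly-by splitting that threatens superlinear growth, so any correct argument must handle preimages of $\grazing_0$ bounding regular sectors. (The paper's own proof of Lemma~\ref{l_active} exhibits such sectors explicitly: the curve $\cur_1$ bounding $\sect'_{B,j}$ can be the image of trajectories leaving tangentially $\Gamma^*_{B,j-1}$.)

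Second, your step three --- the per-step bound $C_0$ on the number of new bounding arcs through $z$ --- is the actual content of the lemma, and you do not prove it; as you note yourself, the number of regular branches of $\cm^{l-1}$ at $z$ could a priori grow with $l$, and each such branch could contribute new arcs at step $l$, so the argument as stated is circular. The paper closes this loop by a different mechanism: it classifies sectors as \emph{active} or \emph{inactive} according to whether the image sector contains an active (decreasing) quadrant, observes that only active image sectors can be cut by future singularities, bounds the one-step cutting by a constant $K$ (Lemma~\ref{l_bounded1step}, using the bounded horizon and the lower bound on free paths between improper collisions), and then proves the key Lemma~\ref{l_active}: among the regular image sectors of an active quadrant, at most one is again active. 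Induction on $n$ then yields $\cplex_n(z)|\qdrn\leq(K+2)n$ and hence the linear bound. Your proposal has no counterpart to Lemma~\ref{l_active}, which is where the geometry of type A and type B walls and the tangency analysis actually enter; without such an ingredient the linear bound does not follow.
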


As we will mention later, in presence of corner points, such a linear bound is not true
for the total complexity of the singularity set. Indeed, our ``regular complexity'' turns
out to be \emph{much smaller}, and especially much easier to control, than the total
complexity.  This is true despite the fact that corner points, which are responsible for
branching of the trajectories, do not themselves cause trouble.  In our detailed
study of the mechanism of complexity growth, we will see that it is only corner points and
grazing collisions \emph{together} that make a superlinear growth possible.

Assuming the above lemma, the proof of Lemma~\ref{l_corner} follows from a variation on
rather standard arguments.
\begin{proof}[Proof of Lemma~\ref{l_corner}]
  Let us fix $\Ncorner$ so that $\Xi\Ncorner<\frac{1}{3}c\inv\minLambda^{\Ncorner}$;
  notice that since $\Xi$, $c$ and $\minLambda$ depend only on $\btable$, then so does
  $\Ncorner$.  Let $h=k_0^{-2}-{k_0+1}^{-2}$ denote the height of the homogeneity strip
  $\sst_{k_0}$.  We will show that there exists a $\delta$ so that
  \[
  \sup_{W:\,|W|\leq\delta}\compl_\Ncorner(W)<\frac{1}{3}c\inv\minLambda^{\Ncorner}.
  \]

  Let $X_N\subset\sing_{-N,0}$ denote the (finite) set of multiple points of $\sing_{-N,0}$.  We
  fix a small $\rho$ so that $\rho<\rho_N$, where $\{\rho_n\}$ is the sequence given by
  Lemma~\ref{l_portrait} and for any $0 <l\leq \Ncorner$, the diameter of every connected
  component of $\cm^l\nhbd_{z,\rho}$ is bounded above by $h/3$ for any $z\in X_N$.

  We choose $\delta$ so that for any u-curve $W$ with $|W|<\delta$:
  \begin{itemize}
  \item for any $0 <l\leq\Ncorner$, each component of $\cm^lW$ is shorter than $h/3$; we
    can ensure this by Lemma~\ref{l_maximalExpansion};
  \item for any $z,z'\in X_N$, $z'\not=z$, if $W\cap\nhbd_{z,\rho}\not = \emptyset$, then
    $W\cap\nhbd_{z',\rho}=\emptyset$ (by Lemma~\ref{l_portrait});
  \item if $W\cap\nhbd_{z,\rho}=\emptyset$ for each $z\in X_N$, then $\cm W$ has at most
    $2$ components (by Lemma~\ref{l_transversality});
  \item if $W\cap\nhbd_{z,\rho}\not=\emptyset$ for some $z\in X_N$, then $W$ is cut by
    $\sing_{-\Ncorner,0}$ in at most $\Xi\Ncorner$ components which are contained in a regular
    sector (by Lemma~\ref{l_linearBound} and Lemma~\ref{l_transversality}).
  \end{itemize}

  All components which belong to a nearly grazing sector are cut by
  $\exsing_{-\Ncorner,0}$ in H-components which are necessarily nearly grazing (by our
  assumptions on $\rho$ and $\deltaCorner$): as such they do not contribute to
  $\compl_\Ncorner(W)$.  On the other hand, a component belonging to a regular sector can
  be further split in H-components, but only at most one of them will be regular (the case
  $\Ncorner=1$ is trivial; the case $\Ncorner>1$ can be obtained by induction).  This
  concludes the proof of our lemma.
\end{proof}
Fix $z\in\cs$; let us call the straight billiard trajectory emanating from $z$ the
\emph{reference trajectory}; we denote by $x\in\partial\btable$ the starting configuration
point of the reference trajectory.  After --perhaps-- one or more improper collisions, the
reference trajectory will eventually properly collide with $\partial\btable$ at some point
$x'\in\partial\btable$.  By Remark~\ref{r_freePathImproper} we conclude that the number of
such improper collisions is uniformly bounded by $\tau_\text{max}/\tau_*$.  A trajectory
close to the reference trajectory could experience its first collision with different
boundary walls; any such possibility corresponds to a sector $\sect_i$ of order one (see
Figure~\ref{f_manyImproper}).
\begin{lem}\label{l_bounded1step}
  The number of sectors of order $1$ is uniformly bounded by $2(\tmax/\tau_*+1)$.
\end{lem}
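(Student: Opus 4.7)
The plan is to trace the reference trajectory from $z$ until its first proper collision and, at each collision encountered along the way, to enumerate the possible first proper collisions of nearby trajectories; each such possibility accounts for at most one sector of order one, so a bound on the number of collisions in the reference sequence combined with a uniform bound on the branching at each of them will yield the claim.

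More concretely, I would write $x=x_0,x_1,\ldots,x_{k-1},x_k=x'$ for the boundary points visited in order by the reference trajectory, with $x_0,\ldots,x_{k-1}$ the (corner) improper collisions and $x_k$ the first proper one. By Remark~\ref{r_freePathImproper} any two consecutive improper collisions are separated by a free path of length at least $\tau_*$, while assumption (A2) bounds the total length of the reference segment from $z$ to $x_k$ by $\tmax$; hence $k\le \tmax/\tau_*$, so that the reference sequence contains at most $\tmax/\tau_*+1$ collisions.

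At each of these collisions I would then argue that at most two sectors of order one meet. For the final proper collision at $x_k$ this is the content of the lemma recalled in Section~\ref{s_definition} and borrowed from \cite[Section 2.8]{ChM}: a proper corner collision admits at most two distinct continuations of the flow. For an improper collision at $x_j$ with $j<k$, the discussion accompanying Figure~\ref{f_properCorner} shows that a nearby trajectory either (i) makes its first proper collision on one of the at most two walls meeting at $x_j$, each possibility producing one sector whose boundary lies on curves of $\sing_0\cup\sing_{-1}$, or (ii) misses both walls and continues along the reference trajectory to the next step, where its contribution is already accounted for in the counting at $x_{j+1}$. Summing over the at most $\tmax/\tau_*+1$ collisions in the reference sequence then gives the desired bound $2(\tmax/\tau_*+1)$.

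The argument is essentially combinatorial, and I do not anticipate any genuine obstacle. The only point requiring some care is choosing $\rho_1$ in Lemma~\ref{l_portrait} small enough that every trajectory originating in $\nhbd_{z,\rho_1}$ either shadows the reference trajectory precisely through the list of candidate branchings above or terminates in one of the enumerated first proper collisions; this follows routinely from continuous dependence of the flow on initial conditions together with the uniform bound $\tmax/\tau_*+1$ on the number of reference segments.
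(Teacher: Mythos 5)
Your proof is correct and follows essentially the same route as the paper's: bound the number of improper collisions along the reference segment by $\tmax/\tau_*$ using Remark~\ref{r_freePathImproper} together with the bounded horizon, and observe that each collision site (the improper ones and the final proper one) contributes at most two sectors of order one. The only cosmetic difference is that you phrase the branching at an improper collision in terms of the \emph{first proper} collision of nearby trajectories, whereas a sector of order one is determined by the \emph{first} collision of the billiard map, proper or not; this does not affect the count.
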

\begin{proof}
  Note that we are only following the perturbed trajectories \emph{until the first
    collision}, so the question reduces to counting the possible ways in which this first
  collision can occur.  An improper collision can create at most two sectors, one
  corresponding to collisions occurring on the left (with respect to the selected
  orientation) of the improper collision, and one corresponding to collisions occurring on
  the right (as already noted earlier, only one is possible unless we have a tangential
  collision).  The remaining proper collision can create, for the same reason, at most two
  sectors.
\end{proof}
We can now give the
\begin{proof}[Proof of Lemma~\ref{l_grazing}]
  As in the proof of Lemma~\ref{l_corner}, let $h$ denote the height of the homogeneity
  strip $\sst_{k_0}$; choose $\delta$ so that, for any u-curve $W$ shorter than $\delta$:
  \begin{itemize}
  \item each component of $\cm W$ is shorter than $h/3$ (by
    Lemma~\ref{l_maximalExpansion});
  \item $W$ is cut in at most $2(\tmax/\tau_*+1)$ components (by
    Lemma~\ref{l_bounded1step}).
  \end{itemize}
  A nearly grazing component might be further split into H-components and will contribute
  with at most $\Const k_0\inv=\sum_{k\geq k_0}\Const k^{-2}$; the contribution of all
  nearly grazing components is thus bounded by $2 \Const (\tmax/\tau_*+1)k_0\inv$, which
  can be made arbitrarily small by taking $k_0$ to be large enough.
\end{proof}

We now proceed with the proof of Lemma~\ref{l_linearBound}.  Define the image sector
$\sect_i'=F\sect_i$ and let $z'_i$, which we call the \emph{center} of $\sect_i'$, be the image
of $z$ by the corresponding branch of the dynamics.
 \begin{figure}[!ht]
   \centering
   \includegraphics{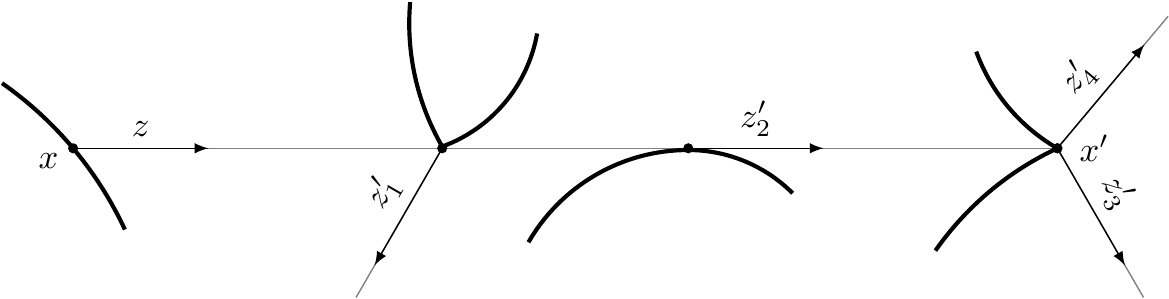}
   \caption{The straight billiard trajectory emanating from $z$ might encounter several
     improper collisions before properly hitting the boundary.  For each improper
     collision we have one or two possible new images $z'$. We stress that $z'$ are images
     of $z$ by a single iteration of $\cm$ (in particular notice that $z'_3$ and $z'_4$ in
     the picture will undergo several other immediate collisions before leaving the corner
     point $x'$).  Notice that $z'_4$ is only a theoretical possibility that we count, but
     it does not occur in reality.}
   \label{f_manyImproper}
 \end{figure}
 The next lemma refers to the key phenomenon which prevents complexity to grow fast in the
 class of billiards we are considering.  To understand it, it is worth to spend some time
 to explain the mechanism of complexity growth.  We are counting the number of
 \emph{sectors} into which a small neighborhood of $z$ is cut by singularities.
 \emph{Future time} singularities are \emph{stable} curves, so a sector $\sect$ bordered
 by two such singularities is either contained entirely in an active quadrant, or it
 contains an entire inactive quadrant.  On the other hand, the \emph{image} of such a
 sector is bordered by the \emph{images} of singularities, which are either the boundary
 of the phase space or \emph{past time} singularities, both being in (or on the boundary
 of) the \emph{unstable} quadrants.  As a result, the \emph{image} $\cm \sect$ is either
 contained entirely in an \emph{inactive} quadrant --then we call it an \emph{inactive
   sector}-- or it contains an entire \emph{active} quadrant and then we call it an
 \emph{active sector}.

 When considering higher iterates, we look at how $F(\sect)$ is further cut by
 future singularities.  Clearly, it can only be further cut if it is active.  To bound
 complexity, we first need to understand the number of active sectors as time evolves.

 If there are no corner points, one can make use of the continuity of the flow to see that
 the number of active sectors is always $2$ (resulting in linear complexity): there is
 simply no space for more.  However, in the presence of corner points, singular
 trajectories can branch, and after a branching there is in principle room for $4$ active
 sectors.  When we look at the possible collisions in detail, we will see that a corner
 collision --maybe somewhat surprisingly-- does not increase the number of active
 sectors, but a \emph{combination of corner and grazing collisions} is able to do that.
 In fact, a grazing collision is able to turn an inactive sector into an active one, which
 means that the number of active sectors \emph{can grow}, and complexity \emph{can be
   superlinear}.  We can (and did) avoid going into this in detail by making use of strong
 expansion near grazing collisions and counting \emph{regular} sectors only.  The key
 lemma that follows is about conservation of the number of \emph{regular active sectors}.
\begin{lem}\label{l_active}
  Let $\qdrn$ be an active quadrant of $\nhbd\ni z$ and $\{\sect_{\text{reg},j}\}$ denote regular
  sectors of order $1$ in which $\qdrn$ is subdivided by singularities.  Then, at most one of
  the regular image sectors $\{\sect'_{\text{reg},j}\}$ contains an active quadrant.
\end{lem}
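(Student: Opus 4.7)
The plan is to analyze how the piecewise-smooth map $\cm$ deforms the active quadrant $\qdrn$ at $z$. By Lemma~\ref{l_alignment}, the singularities of $\sing_{-1,0}$ that subdivide $\qdrn$ into its order-$1$ sectors (regular sectors $\{\sect_{\text{reg},j}\}$ and nearly grazing ones) are stable curves; under each smooth branch $\cm|_{\sect_{\text{reg},j}}$ these stable boundaries map to unstable curves at the corresponding image center $z'_j$. The boundary $\partial\qdrn$ itself consists of two half-axes emanating from $z$ that lie on the common boundary of the stable and unstable cones, and their images at the various $z'_j$ again lie on the cone boundaries there. Consequently each image sector $\sect'_{\text{reg},j}$ is bounded by a combination of cone-boundary images of $\partial\qdrn$ and genuine unstable images of interior singularities, and it contains an active quadrant at $z'_j$ precisely when its pair of bounding curves flanks an entire decreasing (stable-cone) quadrant of the ambient neighborhood.

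I would next establish the no-branching baseline: if $\cm$ were a single-valued diffeomorphism on a full disk around $z$, then $\cm\qdrn$ would be a single region at $\cm z$ flanked by two cone-boundary curves. Using that $\cm$ is orientation-preserving and sends the stable cone into itself, a direct computation shows that $\cm\qdrn$ contains exactly one active quadrant at $\cm z$. The heart of the argument is then to prove that this ``active-quadrant count of one'' is topologically invariant under the multivaluedness of $\cm$ introduced by corner branching. Each interior stable singularity of $\sing_{-1,0}$ cuts $\qdrn$ into two adjacent subsectors whose images sit at possibly different $z'_j$'s but share a common unstable-curve boundary; by the transversality of stable and unstable cones (Lemma~\ref{l_transversality}) and orientation preservation, such a split redistributes the single ``active quadrant'' to only one side of the unstable curve. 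Inducting over all such splits yields at most one active regular image sector.

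The main obstacle I expect is handling intermediate improper collisions along the reference trajectory: a regular sector $\sect_{\text{reg},j}$ whose perturbed trajectory hits an intermediate wall (formerly grazed improperly by the reference) has its image center $z'_j$ positioned near a corner on that wall, and one must rule out that its image sector contains an active quadrant in addition to the ``main'' image sector produced at $x'$. To handle this, I would exploit the regularity condition $\cm z\vert_{\sect_{\text{reg},j}}\in\hcs_0$, which bounds the image collision angle uniformly away from $\pm\pi/2$ and hence caps how much the image sector can rotate; combined with the combinatorial classification of Lemma~\ref{l_bounded1step} and the geometric picture of Figure~\ref{f_manyImproper}, this prevents the emergence of an extra active image sector. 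Making this precise is the technical crux and is exactly the ``without grazing, corner branching alone cannot increase the active count'' statement anticipated in the discussion preceding the lemma, which in turn explains the mechanism of superlinear complexity growth and motivates why restricting to \emph{regular} sectors is essential.
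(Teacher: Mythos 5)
Your proposal has a genuine gap at its central step. The claim that an interior stable singularity splits $\qdrn$ into two subsectors ``whose images sit at possibly different $z'_j$'s but share a common unstable-curve boundary,'' so that the split merely ``redistributes the single active quadrant to one side,'' is not correct as stated and cannot carry the induction. When the cut is caused by a corner point, the two adjacent subsectors are mapped by \emph{different branches} of $\cm$ to neighborhoods of \emph{different} points of phase space (collisions with different walls, possibly in different components $\cs_i$); their images do not share a boundary curve, and each image sector must be tested for an active quadrant locally at its own center $z'_j$. A priori each branch could contribute its own active image sector --- the paper's discussion preceding the lemma stresses exactly this (``after a branching there is in principle room for $4$ active sectors''), and moreover the count \emph{does} increase when grazing collisions are involved. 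Hence no argument relying only on orientation preservation, cone invariance and transversality can succeed: such an argument would be insensitive to the grazing/non-grazing distinction, yet the conclusion fails without it. Your appeal to the regularity condition is confined to ``capping rotation'' at intermediate improper collisions, which is not where the condition is actually needed.

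What is missing is the concrete geometric mechanism. The paper's proof fixes the reference trajectory, classifies the walls it meets as lying above (type A) or below (type B) it, and shows: (i) a trajectory emanating from the NW quadrant that misses the \emph{leftmost} type-A wall must cross the reference trajectory and therefore collides with a type-B wall, so there is exactly \emph{one} type-A sector --- the only candidate for an active image; (ii) collisions with the back of a scatterer are possible only tangentially, so those sectors are nearly grazing and excluded from the regular count; and (iii) for each frontal type-B sector, the two boundary curves of its image sector at $z'_B$ are explicitly identified as u-curves (images of fans emanating from corner points or of tangent families) or a downward vertical segment, and a sector pinched between such curves cannot contain a decreasing quadrant. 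Your baseline computation (a single smooth branch sends the NW quadrant to a sector containing exactly one active quadrant) is fine, and your instinct that regularity must enter is right, but the inductive ``topological invariance'' step replaces the entire substance of the lemma with an assertion, so the proof does not go through as proposed. A small additional imprecision: the quadrant boundaries (the coordinate half-axes) lie on the boundary of the increasing and decreasing cones, not of the stable and unstable cones, which are strictly narrower.
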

Assuming the above lemma, we can now give the
\begin{proof}[Proof of Lemma~\ref{l_linearBound}]
  The proof follows from an argument that is similar to the one presented at the end of
  \cite[Section 4]{ChY}.  Let us fix a multiple point $z\in\sing_{-n,0}$ and let
  $\nhbd=\nhbd_{z,\rho_n}$; for an arbitrary sector $\sect\subset \nhbd$ we can define the quantity
  $\cplex_n(z)|\sect$, which is the number of regular sectors of order $n$ meeting at $z$
  which intersect non-trivially the sector $\sect$.  Let $\{\qdrn_\alpha\}$ denote the quadrants
  of $\nhbd$ (recall that there can be either $1$, $2$ or $4$); then necessarily
  \[
  \cplex_n(z)\leq\sum_\alpha\cplex_n(z)|\qdrn_\alpha.
  \]
  If $\qdrn$ is inactive then we have trivially $\cplex_n(z)|\qdrn=1$; therefore, it
  suffices to obtain a linear bound on $\cplex_n(z)|\qdrn$ if $\qdrn$ is active.  Let us
  fix an arbitrary active quadrant $\qdrn\subset \nhbd$ and let
  $\{\sect_j\}_{j\in\{1,\cdots,k\}}$ denote the regular sectors of order $1$ which
  intersect non-trivially $\qdrn$; by further cutting some of the $\sect_j$ along the
  vertical and horizontal axes, we can assume that all $\sect_j$ are indeed contained in
  $\qdrn$.  Notice that $k$ is uniformly bounded by some $K$ by
  Lemma~\ref{l_bounded1step}.  Recall that $\sect'_j=F\sect_j$ and $z'_j$ denotes the
  center of sector $\sect'_j$.  Then, by definition
  \[
  \cplex_n(z)|\qdrn = \sum_j\cplex_{n-1}(z'_j)|\sect'_j
  \]
  By Lemma~\ref{l_active}, only one of the $\sect'_j$s will be active, thus we have:
  \[
  \cplex_n(z)|\qdrn \leq K+ \cplex_{n-1}(z'_\text{active})|\sect'_\text{active}
  \]
  from which we can conclude by induction that $\cplex_n(z)|\qdrn<(K+2)n$, which proves our
  statement with $\Xi=2(K+2)+2$.
\end{proof}
We now come to the essence of this work and give the
\begin{proof}[Proof of Lemma~\ref{l_active}]
  Recall that $x\in\partial\btable$ and $x'\in\partial\btable$ denote respectively the
  starting and ending point of our reference trajectory; for ease of exposition, assume
  that the reference trajectory is horizontal and that $x'$ lies to the right of $x$ (as
  in Figure \ref{f_manyImproper}).  Additionally, assume the point $x'$ to be a corner
  point\footnote{ If $x'$ is a regular point, we can artificially break the corresponding
    wall at $x'$; this will only make the singularity set larger.}.  The reference
  trajectory will intersect a number of walls; we say that a wall is of \emph{type A}
  (resp. \emph{type B}) if it lies above (resp. below) the reference trajectory.  We say
  that a collision is of \emph{type A} (resp. of \emph{type B}) if it occurs with a wall
  of type A (resp. of type B).  By definition, all trajectories belonging to the same
  sector $\sect\subset \qdrn$ will have their first collision with the same wall, therefore their
  type depends on $\sect$ only: we thus naturally obtain the definition of sectors of
  \emph{type A} and sectors of \emph{type B}.

  To fix ideas we assume that $\qdrn$ is the NW quadrant, i.e. if $z=(x,\varphi)$, we only
  consider trajectories leaving from a left half-neighborhood of $x$ (that is, above the
  reference trajectory according to our choice for the orientation) with an angle slightly
  larger than $\varphi$ (see Figure~\ref{f_proof1}).  The proof for the SE quadrant
  follows by the same argument, performing a few simple modifications, and it is left to
  the reader.
  \begin{figure}[!h]
    \centering
    \centering
    \includegraphics{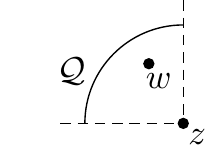}
    \includegraphics{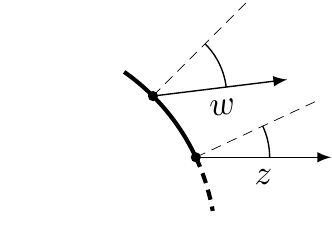}
    \caption{Phase points corresponding to the NW quadrant.}
    \label{f_proof1}
  \end{figure}

  We now proceed to prove the following items, which immediately imply our lemma:
  \begin{enumerate}[i)]
  \item there exists exactly one sector of type A;
  \item the image of any regular sector of type B does not contain an active quadrant.
  \end{enumerate}

  First, we claim that any trajectory emanating from $\qdrn$ and undergoing a collision of type A
  will necessarily hit the \emph{leftmost} $A$-wall; we denote this wall by $\Gamma_A$
  (see Figure~\ref{f_proof2}) and the corresponding sector by $\sect_A$.
  \begin{figure}[!h]
    \centering
    \includegraphics{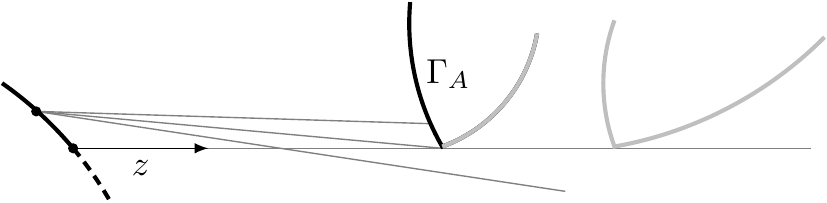}
    \caption{Type A collisions may only occur on the wall $\Gamma_A$; all other type A
      walls cannot be reached by any trajectory leaving from $\qdrn$.}
    \label{f_proof2}
  \end{figure}

  In fact, by elementary geometry considerations (see once again Figure~\ref{f_proof2}),
  any trajectory starting above the reference trajectory, and missing $\Gamma_A$ will
  cross the reference trajectory and will consequently undergo a type B
  collision, i.e. it will miss all other walls of type A.  This immediately implies that
  $\sect_A$ is the only sector of type A, which proves item i).  Notice moreover that the same
  considerations imply that trajectories leaving $\qdrn$ can only hit walls of type B which
  are either to the left or immediately to the right of $\Gamma_A$.

  Let us now consider collisions of type B: the situation is as in Figure~\ref{f_proof3}.
  \begin{figure}[!h]
    \centering
    \includegraphics{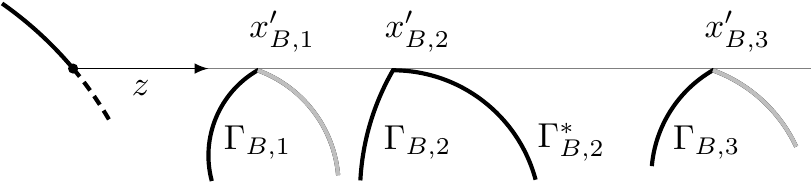}
    \caption{Several possibilities corresponding to type B collisions.}
    \label{f_proof3}
  \end{figure}
  As we mentioned earlier, each scatterer generates at most two new sectors; one
  corresponds to frontal collisions with the scatterer, the second (if present) to
  collisions with the back of the scatterer\footnote{If the collision is with a regular
    point, we can always artificially add a singularity which distinguishes front and back
    scatterings}.  Denote with $\Gamma_{B,j}$ walls facing $x$ (which give rise to frontal
  collisions) and with $\Gamma^*_{B,j}$ walls facing away from $x$ (which give rise to
  back collisions); we choose the index $j$ so that $\Gamma_{B,1}$ is the leftmost wall
  and respecting the left-to-right ordering of the scatterers.  Let us denote with
  $\sect_{B,j}$ and $\sect_{B,j}^*$ the corresponding sectors; a sample singularity portrait is
  depicted in Figure~\ref{f_quarter}.  Finally, denote by $x'_{B,j}$ the corner point
  corresponding to the $j$-th pair of walls, that is, the intersection of $\Gamma_{B,j}$
  (or $\Gamma_{B,j}^*$) with the reference trajectory.
  \begin{figure}[!h]
    \centering
    \includegraphics{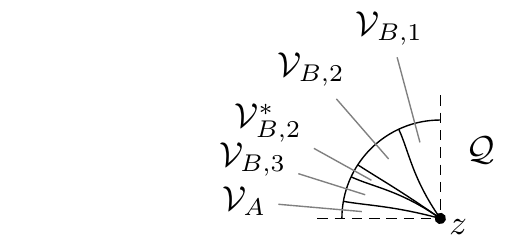}
    \caption{Typical singularity portrait of $\qdrn$: there is only one type A sector, and
      several type B sectors; sectors corresponding to collision with the back of a
      scatterers are nearly grazing.}
    \label{f_quarter}
  \end{figure}

  Our next remark is simple, but extremely important
  \begin{remark}
    Our choice of $\rho_n$ implies that we can have a back collision with $\Gamma_{B,j}^*$
    only if $\Gamma_{B,j}^*$ is tangent to the reference trajectory (see
    e.g. $\Gamma_{B,2}^*$ in Figure~\ref{f_proof3}).  In particular, all non-empty
    $\sect^*_{B,j}$ are \emph{necessarily} nearly grazing and thus they can be neglected.
  \end{remark}
  We now prove that every $\sect'_{B,j}$ is inactive, which, by the above remark, implies item
  ii).  Fix $j$ and let us denote $\sect_B=\sect_{B,j}$ for ease of exposition; let $\Gamma_B$ be
  the corresponding wall, $x'_B$ denote the corresponding corner point and $z'_B$ the
  corresponding phase point; similarly, let $x'_A$ be the corner point corresponding to
  $\Gamma_A$.  As a preliminary remark, notice that, by construction, $\sect'_B$ is contained
  in a left half-ball centered at $z'_B$; $\sect'_B$ is bounded by $3$ curves; we denote the
  two curves which join at $z'_B$ by $\cur_1$ and $\cur_2$ according to counterclockwise
  orientation (see Figure~\ref{f_proof4}).
  \begin{figure}[!h]
    \centering
    \includegraphics{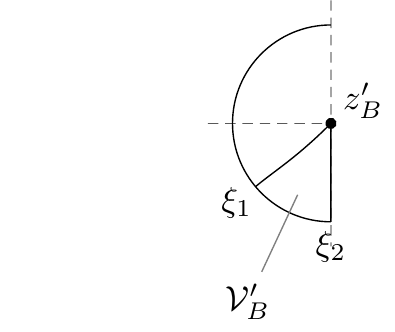}
    \includegraphics{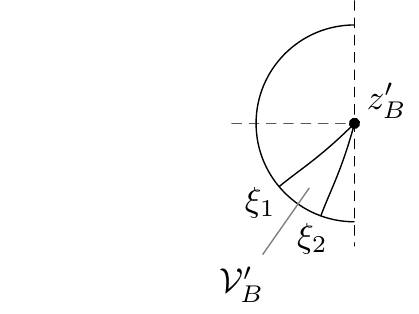}
    \includegraphics{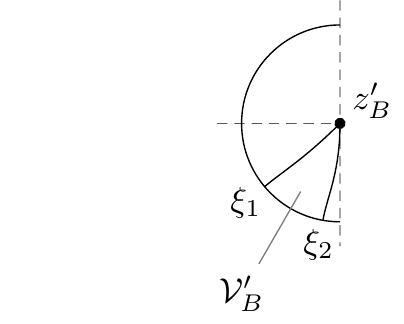}
    \caption{Possible portraits of $\sect'_B$}
    \label{f_proof4}
  \end{figure}

  We claim that $\cur_1$ is a u-curve: there are two possibilities (see once again
  Figure~\ref{f_proof3}):
  \begin{itemize}
  \item $j=1$, then $\cur_1$ is the image of the fan emanating from $x$, i.e. a
    u-curve;
  \item if $j>1$, then $\cur_1$ is the image of the fan emanating from $x'_{B,j-1}$, or
    the image of trajectories leaving tangentially $\Gamma^*_{B,j-1}$; in any of the two
    cases, we have a u-curve.
  \end{itemize}
  We claim that $\cur_2$ is either a u-curve or a vertical half-segment pointing downwards
  from $z'_B$.  Then our proof is complete: since u-curves are increasing, we conclude
  that $\sect'_B$ cannot contain an active quadrant.  There are three possibilities (see
  Figure~\ref{f_proof4}):
  \begin{enumerate}[(a)]
  \item $x'_A$ lies on the right of $x'_B$;
  \item $x'_A$ lies on the left of $x'_B$;
  \item $x'_A$ coincides with $x'_B$ (and thus with $x'$);
  \end{enumerate}
  Consider case (a): then there exist trajectories which leave $\qdrn$ and hit $x'_B$
  directly; then $\cur_2$ is the image of such trajectories, which corresponds to a
  vertical half-segment pointing downwards (this is depicted in the leftmost picture in
  Figure~\ref{f_proof4}).  In case (b), by our previous remark, $\Gamma_B$ has to be the
  wall immediately on the right of $\Gamma_A$: in this case there are no trajectories
  (excluding the reference one) leaving $\qdrn$ and hitting directly $x'_B$ and $\cur_2$ is
  either the image of the fan emanating from $x'_A$, or the image of trajectories leaving
  tangentially $\Gamma_A$ (if $\Gamma_A$ intersect the reference wall with a tangency).
  In any case $\cur_2$ turns out to be a u-curve (see the central picture in
  Figure~\ref{f_proof4}). In case (c) there are two possibilities, either $\Gamma_A$ is
  tangent to the reference trajectory, or not; in the former case we conclude by the
  argument described in case (b) (and this is depicted by the rightmost picture in
  Figure~\ref{f_proof4}). The latter case follows from the argument presented in case (a)
  (and corresponds once again to the leftmost picture in Figure~\ref{f_proof4}).  This
  proves our claim, and concludes the proof of our main lemma.
\end{proof}
\bibliographystyle{abbrv} \bibliography{bil}

\begin{thebibliography}{10}

\bibitem{Bun}
L.~A. Bunimovich.
\newblock Existence of transport coefficients.
\newblock In {\em Hard ball systems and the {L}orentz gas}, volume 101 of {\em
  Encyclopaedia Math. Sci.}, pages 145--178. Springer, Berlin, 2000.

\bibitem{bsch1}
L.~A. Bunimovich, Y.~G. Sina{\u\i}, and N.~I. Chernov.
\newblock Markov partitions for two-dimensional hyperbolic billiards.
\newblock {\em Uspekhi Mat. Nauk}, 45(3(273)):97--134, 221, 1990.

\bibitem{bsch2}
L.~A. Bunimovich, Y.~G. Sina{\u\i}, and N.~I. Chernov.
\newblock Statistical properties of two-dimensional hyperbolic billiards.
\newblock {\em Uspekhi Mat. Nauk}, 46(4(280)):43--92, 192, 1991.

\bibitem{ch}
N.~Chernov.
\newblock Decay of correlations and dispersing billiards.
\newblock {\em J. Statist. Phys.}, 94(3-4):513--556, 1999.

\bibitem{ch06b}
N.~Chernov.
\newblock Advanced statistical properties of dispersing billiards.
\newblock {\em J. Stat. Phys.}, 122(6):1061--1094, 2006.

\bibitem{cd}
N.~Chernov and D.~Dolgopyat.
\newblock Brownian {B}rownian motion. {I}.
\newblock {\em Mem. Amer. Math. Soc.}, 198(927):viii+193, 2009.

\bibitem{ChM}
N.~Chernov and R.~Markarian.
\newblock {\em Chaotic billiards}, volume 127 of {\em Mathematical Surveys and
  Monographs}.
\newblock American Mathematical Society, Providence, RI, 2006.

\bibitem{ChY}
N.~Chernov and L.~S. Young.
\newblock Decay of correlations for {L}orentz gases and hard balls.
\newblock In {\em Hard ball systems and the {L}orentz gas}, volume 101 of {\em
  Encyclopaedia Math. Sci.}, pages 89--120. Springer, Berlin, 2000.

\bibitem{DimaLectures}
D.~Dolgopyat.
\newblock Limit theorems for hyperbolic systems.
\newblock Lecture notes for the International Workshop on Global Dynamics
  Beyond Uniform Hyperbolicity, Beijing, China, 2009.

\bibitem{MN1}
I.~Melbourne and M.~Nicol.
\newblock Almost sure invariance principle for nonuniformly hyperbolic systems.
\newblock {\em Comm. Math. Phys.}, 260(1):131--146, 2005.

\bibitem{MN2}
I.~Melbourne and M.~Nicol.
\newblock Large deviations for nonuniformly hyperbolic systems.
\newblock {\em Trans. Amer. Math. Soc.}, 360(12):6661--6676, 2008.

\bibitem{P}
F.~P{\`e}ne.
\newblock Rates of convergence in the {CLT} for two-dimensional dispersive
  billiards.
\newblock {\em Comm. Math. Phys.}, 225(1):91--119, 2002.

\bibitem{RY}
L.~Rey-Bellet and L.-S. Young.
\newblock Large deviations in non-uniformly hyperbolic dynamical systems.
\newblock {\em Ergodic Theory Dynam. Systems}, 28(2):587--612, 2008.

\bibitem{SV}
D.~Sz{\'a}sz and T.~Varj{\'u}.
\newblock Local limit theorem for the {L}orentz process and its recurrence in
  the plane.
\newblock {\em Ergodic Theory Dynam. Systems}, 24(1):257--278, 2004.

\bibitem{tsujii}
M.~Tsujii.
\newblock Piecewise expanding maps on the plane with singular ergodic
  properties.
\newblock {\em Ergodic Theory Dynam. Systems}, 20(6):1851--1857, 2000.

\bibitem{Y2}
L.-S. Young.
\newblock Statistical properties of dynamical systems with some hyperbolicity.
\newblock {\em Ann. of Math. (2)}, 147(3):585--650, 1998.

\bibitem{yo99}
L.-S. Young.
\newblock Recurrence times and rates of mixing.
\newblock {\em Israel J. Math.}, 110:153--188, 1999.

\end{thebibliography}
\end{document}